\author{Sean Moss}
\address{University College\\
  Oxford\\
  OX1 4BH\\
  UK}
\thanks{The author was supported by an EPSRC studentship at DPMMS, University of Cambridge, and by a Junior Research Fellowship at University College Oxford.}
\keywords{simplicial sets, model category, ex-infinity}
\title{Another approach to the Kan-Quillen model structure}
\newtheorem{theorem}{Theorem}
\newtheorem{proposition}[theorem]{Proposition}
\newtheorem{lemma}[theorem]{Lemma}
\newtheorem{corollary}[theorem]{Corollary}
\theoremstyle{definition}
\newtheorem{definition}[theorem]{Definition}
\newtheorem{notation}[theorem]{Notation}
\newtheorem{remark}[theorem]{Remark}
\newcommand{\ex}{\mathop{\mathsf{Ex}}}
\newcommand{\exinf}{\mathop{\ex^\infty}}
\newcommand{\sd}{\mathop{\mathsf{sd}}}
\newcommand{\sset}{\mathsf{sSet}}
\newcommand{\nond}[1]{#1_{\textrm{\scriptsize n.d.}}}
\newcommand{\parent}{\mathop{\mathrm{{}P}}}
\newcommand{\ancpre}{\mathrel{\preceq_p}}
\newcommand{\anc}{\mathrel{\prec}}
\newcommand{\modanc}[1]{\mathrel{\prec_{#1}}}
\newcommand{\rank}{\mathop{\mathsf{rank}}}
\newcommand{\ord}{\mathsf{Ord}}
\newcommand{\children}[1]{#1_{\textrm{\scriptsize II}}}
\newcommand{\parents}[1]{#1_{\textrm{\scriptsize I}}}
\newcommand{\horn}{\Lambda}
\newcommand{\NN}{\mathbb{N}}
\newcommand{\id}{\mathrm{id}}
\newcommand{\ww}{\mathcal{W}}
\newcommand{\ff}{\mathcal{F}}
\newcommand{\cc}{\mathcal{C}}
\newcommand{\tf}{t\mathcal{F}}
\newcommand{\tc}{t\mathcal{C}}
\newcommand{\aaa}{\mathbb{A}}
\newcommand{\qarent}{\mathop{\mathrm{{}Q}}\nolimits}
\newcommand{\covered}[1]{\mathrel{\triangleleft_{#1}}}
\newcommand{\coorda}[1]{\coordinate (#1) at (-1.5454813220625092,0.15847321686606022);}
\newcommand{\coordb}[1]{\coordinate (#1) at (0.4141104721640331,-0.5914300969830179);}
\newcommand{\coordc}[1]{\coordinate (#1) at (1.1313708498984762,0.43295688011695754);}
\newcommand{\coordd}[1]{\coordinate (#1) at (0.0,2.0905007438022025);}
\begin{document}
\maketitle
\begin{abstract}
  By careful analysis of the embedding of a simplicial set into its image under Kan's $\exinf$ functor we obtain a new and combinatorial proof that it is a weak homotopy equivalence.
  Moreover, we obtain a presentation of it as a \emph{strong anodyne extension}.
  From this description we can quickly deduce some basic facts about $\exinf$ and hence provide a new construction of the Kan-Quillen model structure on simplicial sets, one which avoids the use of topological spaces or minimal fibrations.
\end{abstract}

\section{Introduction}\label{sec-introduction}

Model categories were introduced in \cite{QUILLEN1967}, in which a main example is $\sset$, the category of simplicial sets.
The given proof that it is an example is combinatorial but uses the theory of \emph{minimal fibrations}, thus it relies essentially on the axiom of choice.
There is also a topological approach (e.g.\ as in \cite{GOERSSJARDINE1999}), where the category of simplicial sets is related to a category of spaces via a nerve-realization adjunction.

The main contribution of this paper is a new proof of the existence of the Kan-Quillen model structure on $\sset$.
In Section \ref{sec-anodyne}, making the observation that many examples of trivial cofibrations are in fact \emph{strong anodyne extensions}, i.e.\ relative cell complexes of horn inclusions (not featuring retracts), we introduce the notion of \emph{regular pairing}, which is a convenient tool for presenting strong anodyne extensions.
The main example of a strong anodyne extension is the natural embedding of a simplicial set $X$ into its Kan fibrant replacement $\exinf X$ (introduced in \cite{KAN1957}), the properties of which are reviewed within this framework in Section \ref{sec-exinf}.
From this work, and a handful of elementary facts about simplicial homotopy, we quickly deduce the existence of the Kan-Quillen model structure in Section \ref{sec-model-structure}.
With the correct (classically equivalent) formulations of the classes of maps, we avoid using the axiom of choice though we still require the law of the excluded middle.

Finally, in Section \ref{sec-properness}, which stands independently from Sections \ref{sec-exinf} and \ref{sec-model-structure}, one further application of the concept of regular pairing is given, to prove a mild strengthening of the right-properness of this model structure.
Specifically, we show that the pullback of a horn inclusion along a fibration is a \emph{strong} anodyne extension.
Since we describe the strong anodyne extension explicitly, this means that given a pair of maps $Z \xrightarrow{g} Y \xrightarrow{f} X$ for which we are given the \emph{structure} of a Kan fibration for both $f$ and $g$ (i.e.\ a chosen solution for every lifting problem with horn inclusions) we may also equip the pushforward $\Pi_f(g) \in \sset/X$ with the \emph{structure} of a Kan fibration.

It is possible to view the notion of regular pairing used here as essentially a special case of Forman's discrete vector fields for CW-complexes \cite{forman-morse-theory-for-cell-complexes}.
Since an earlier version of this paper, our analysis of Kan's $\exinf$ functor has been adapted by others to stratified homotopy theory~\cite{douteau2018} and to a constructive setting~\cite{henry2019}.

\section{Anodyne extensions}\label{sec-anodyne}

The basic definitions for simplicial sets can be found in Chapter I of \cite{GOERSSJARDINE1999}.
To fix our notation, let us write $\Delta$ for the simplex category with its set of objects being $\{[n] \mid n \geq 0 \}$, where $[n] = \{0,1,\ldots,n\}$, and with maps being order-preserving functions.
We write $d^i$ or $d^i_n$ for the coface maps $[n-1] \to [n]$ and $s^i$ or $s^i_n$ for the codegeneracy maps $[n+1] \to [n]$.
We write $\sset$ for the presheaf category $[\Delta^\mathrm{op},\mathsf{Set}]$ of simplicial sets, $d_i$ and $s_i$ for the face and degeneracy maps of any given simplicial set, $\triangle^n$ for the standard $n$-simplex, and $\horn^n_k$ for its $k$\textsuperscript{th} horn.

Throughout this section $m : A \hookrightarrow B$ will denote a monomorphism in $\sset$.

\begin{definition}\label{def-sae}
  The monomorphism $m$ is a \emph{strong anodyne extension} if it admits an anodyne presentation.
  An \emph{anodyne presentation} for $m$ consists of an ordinal $\kappa$ and a $\kappa$-indexed increasing family of subcomplexes $(A^\alpha)_{\alpha \leq \kappa}$ of $B$, satisfying:
  \begin{itemize}
  \item $A^0 = A$, $A^\kappa = B$,
  \item for every non-zero limit ordinal $\lambda < \kappa$, $\bigcup_{\alpha < \lambda} A^\alpha = A^\lambda$,
  \item for every $\alpha < \kappa$, the inclusion $A^\alpha \hookrightarrow A^{\alpha+1}$ is a pushout of a coproduct of horn inclusions.
  \end{itemize}
\end{definition}

\begin{remark}\label{rem-sae}
  The definition of the class of strong anodyne extensions given above is the same as for the original class of anodyne extensions given in section IV.2 of \cite{GABRIELZISMAN1967} minus closure under retracts (also we have not restricted to \emph{countable} compositions, but \ref{cor-length-omega} below shows that this makes no difference).
  The usual factorization of a simplicial map as an anodyne extension followed by a Kan fibration using Quillen's small object argument (introduced in section II.\S 3 of \cite{QUILLEN1967}) gives us a factorization as a \emph{strong} anodyne extension followed by a Kan fibration.
  It follows easily that the usual notion of anodyne extension is recovered by closing the strong anodyne extensions under retracts.
\end{remark}

\begin{remark}\label{rem-ae-not-sae}
  Not every anodyne extension is strong.
  For example, consider the (contractible) simplicial set which is the quotient of two 2-simplices with edges and vertices identified as indicated:
  \begin{displaymath}
    \begin{tikzpicture}
      \coordinate (a) at (0,0);
      \coordinate (b) at (1,1.73);
      \coordinate (c) at (2,0);

      \draw[fill=black] (a) circle (0.4mm) node[below left] {\scriptsize $0$};
      \draw[fill=black] (b) circle (0.4mm) node[above] {\scriptsize $1$};
      \draw[fill=black] (c) circle (0.4mm) node[below right] {\scriptsize $1$};

      \draw[->,shorten >={0.4mm}]
      (a) edge node[auto,pos=0.4] {$a$} (b)
      (c) edge node[auto,pos=0.4,swap] {$b$} (b)
      (a) -- node[auto,swap] {$a$} (c);

      \begin{scope}[xshift=4cm]
        \coordinate (a) at (0,0);
        \coordinate (b) at (1,1.73);
        \coordinate (c) at (2,0);

        \draw[fill=black] (a) circle (0.4mm) node[below left] {\scriptsize $0$};
        \draw[fill=black] (b) circle (0.4mm) node[above] {\scriptsize $1$};
        \draw[fill=black] (c) circle (0.4mm) node[below right] {\scriptsize $1$};

        \draw[->,shorten >={0.4mm}]
        (a) edge node[auto,pos=0.4] {$a$} (b)
        (c) edge node[auto,pos=0.4,swap] {$b$} (b)
        (a) -- node[auto,swap] {$c$} (c);
      \end{scope}
    \end{tikzpicture}
  \end{displaymath}
  It is easy to check that the inclusion of the edge labelled $c$ is a monomorphism which does not admit an anodyne presentation.
  However, it must be an anodyne extension since its domain and codomain are both contractible (alternatively one can write the inclusion explicitly as a retract of a strong anodyne extension).
\end{remark}

Observe that, in the final clause of \ref{def-sae}, if $m : A^\alpha \to A^{\alpha+1}$ is somehow a pushout of a coproduct of horn inclusions then it is so in an essentially unique way.
Nondegenerate simplices in $A^{\alpha+1}$ but not in $A^\alpha$ come in pairs: one simplex which is not a face of any other nondegenerate simplex in $A^{\alpha+1}$ paired with the unique one of its codimension 1 faces also not in $A^\alpha$.
From such data one can infer which horns must feature in the description.
In fact, one can essentially describe the anodyne presentation using only such information.

\begin{notation}\label{not-nond}
  Let $X$ be any simplicial set, then by $\nond X$ we denote the set
  of nondegenerate simplices of $X$.
\end{notation}

\begin{definition}\label{def-pairing}
  A \emph{pairing} $\parent$ on $m$ is a partition of $\nond{B}\backslash\nond{A}$ into two disjoint sets $\parents{B}$ and $\children{B}$ together with a bijection $\parent : \children{B} \to \parents{B}$.
  Equivalently, a pairing consists of a partition of $\nond{B}\backslash\nond{A}$ into mutually disjoint subsets of size two where each of those subsets is equipped with a bijection to the set $\{\text{I},\text{II}\}$.
  We refer to elements of $\parents{B}$ as \emph{type I} simplices and elements of $\children{B}$ as \emph{type II} simplices.
\end{definition}

Above we briefly described how an anodyne presentation on $m : A \hookrightarrow B$ gives rise naturally to a pairing.
To be more precise: if $x \in \nond B \backslash \nond A$, then there exists a least $\delta$ such that $x \in \nond {A^\delta}$.
By `continuity' at limit ordinals, $\delta$ must be $\alpha + 1$ for some $\alpha$ with $x \notin \nond{A^\alpha}$.
As above, $x$ is either not a face of any other $y \in \nond{A^{\alpha+1}}\backslash \nond{A^\alpha}$, in which case we declare that it is a type I simplex, or if not it is a codimension 1 face of some $y$ which itself is not a face of any $z \in \nond{A^{\alpha+1}}\backslash \nond{A^\alpha}$, in which case we declare that $x$ is of type II.
It is clear that in the former case there is a unique type II simplex in $\nond{A^{\alpha+1}}\backslash\nond{A^\alpha}$ which is a proper (in fact codimension 1) face of $x$, and that in the latter case there is a unique type I simplex in $\nond{A^{\alpha+1}}\backslash\nond{A^\alpha}$ of which $x$ is a proper (in fact codimension 1) face.

We will now work towards characterizing those pairings that arise from anodyne presentations.
The pairing that arises from an anodyne presentation is clearly a proper pairing in the following sense.

\begin{definition}\label{def-proper-pairing}
  A \emph{proper pairing} on $m$ is a pairing $\parent$ on $m$ such that, for each $x \in B_{II}$, $x$ is a codimension 1 face of $\parent x$ in a unique way, i.e.\ $x = d_i(\parent x)$ for a unique $i$.
  In particular, for each $x \in B_{II}$, $\dim (\parent x) = \dim x + 1$.
\end{definition}

\begin{definition}\label{def-ancestral}
  Given a pairing $\parent$ on $m$, the \emph{ancestral relation} $\anc$ is the relation on $\children{B}$ given by $x_1 \anc x_2$ if and only if $x_1 \neq x_2$ and $x_1$ is a face of $\parent x_2$.
\end{definition}
If we are constructing an anodyne presentation giving rise to a particular pairing, then the predecessors of a simplex under the ancestral relation tell us which other simplices must already be present by the stage at which we choose to include that simplex.
Thus in \ref{def-ancestral}, $\children{B}$ really stands in for the quotient of $\nond{B} \backslash \nond{A}$ formed by identifying each type II simplex $x$ with $\parent x$.
\begin{definition}\label{def-ancestral-pre}
  Given a pairing $\parent$ on $m$, the \emph{ancestral preorder} $\ancpre$ is the smallest transitive relation on $\nond{B}\backslash\nond{A}$ satisfying
  \begin{enumerate}
  \item [(a)] whenever $w$ is a face of $z$ then $w \ancpre z$,
  \item [(b)] whenever $x \in \children{B}$ then $\parent x \ancpre x$.
  \end{enumerate}
\end{definition}
\begin{remark}
  The ancestral preorder is generated subject to reflexivity and transitivity by instances of the ancestral relation together with $\parent x \ancpre x$ and $x \ancpre \parent x$ for all $x \in \children{B}$.
\end{remark}

Recall that a binary relation $\triangleleft$ on a set $X$ is \emph{well-founded} if every subset $S \subseteq X$ is either empty or contains an $x \in S$ with $\forall y \in S. \neg(x \triangleleft y)$ (see, for example, \cite{johnstone-notes-on-logic-and-sets}).
This implies, and assuming the axiom of dependent choice is also implied by, the statement that there is no sequence $x_0,x_1,x_2,\ldots \in X$ such that $x_{n+1} \triangleleft x_n$ for all $n \in \mathbb N$.
Also recall that the strict ordering on any class of ordinals is well-founded.

\begin{proposition}\label{prop-anc-wf}
  The ancestral relation arising from an anodyne presentation on $m : A \hookrightarrow B$ is well-founded.
\end{proposition}
\begin{proof}
Define a map $\rank : \children{B} \to \ord$ valued in ordinals by sending $x$ to the supremum of all $\alpha$ such that $x \notin A^\alpha$.
Then $\rank$ is relation-preserving as a map $(\children{B},\anc) \to (\ord,<)$ because if $w,z \in \children{B}$ with $w$ a face of $\parent z$ and $\rank z = \alpha$, then $w,z,\parent z \in A^{\alpha+1}$, and since $z$ is the unique nondegenerate proper face of $\parent z$ in $A^{\alpha+1}$, we in fact have $w \in A^\alpha$ and hence $\rank w < \alpha = \rank z$.
Well-foundedness follows easily from the existence of a relation-preserving map into $(\ord,<)$.
\end{proof}

Let us emphasize that definitions \ref{def-ancestral} and \ref{def-ancestral-pre} rely only upon a pairing on $m$ and not upon an anodyne presentation.
Hence we can make the following definition.
\begin{definition}\label{def-structure}
  A \emph{regular pairing} on $m : A \hookrightarrow B$ is a proper pairing $\parent$ whose ancestral relation is well-founded.
\end{definition}

In the light of Proposition \ref{prop-anc-wf}, only a regular pairing can arise from an anodyne presentation.
In fact, regular pairings are precisely those that do arise from anodyne presentations.

\begin{proposition}\label{prop-structure-to-presentation}
  A regular pairing on $m : A \hookrightarrow B$ gives rise to an anodyne presentation for $m$ (of `length' at most $\omega$, i.e.\ for which $\kappa \leq \omega$).
\end{proposition}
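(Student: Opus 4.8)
The plan is to distil from the P\nobreakdash-structure a rank function $r\colon\nond B\to\NN$ with values in the natural numbers, and then to filter $B$ by the subcomplexes $A^n$ spanned by those non-degenerate simplices of rank at most $n$; since $r$ is $\NN$-valued this will produce a presentation indexed by $\omega$ (or by a finite ordinal, should $r$ happen to be bounded). To build $r$, first define $\rho\colon\nond B\to\NN$ by the recursion $\rho(z)=\sup\{\rho(w)+1\mid w\anc z\}$; this terminates, and stays inside $\NN$, because $\anc$ is well-founded and, by Lemma~\ref{lem-finite-pred}, each set of $\anc$-predecessors is finite, so each supremum is that of a finite set of naturals. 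Now set $r(v)=0$ for $v\in\nond A$, set $r(y)=\rho(y)+1$ for a type~I simplex $y$, and set $r(x)=\rho(\parent(x))+1$ for a type~II simplex $x$; the last clause is forced, since we need $r(x)=r(\parent(x))$ so that a type~II simplex and its parent are adjoined at the same stage. Let $A^n\subseteq B$ be the subcomplex whose non-degenerate simplices are precisely $\{z\in\nond B\mid r(z)\le n\}$.

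The routine bookkeeping to be verified is: that $\{z\mid r(z)\le n\}$ is closed under passage to non-degenerate faces (so such a subcomplex exists and is unique); that $r^{-1}(\{0\})=\nond A$, so $A^0=A$; that the $A^n$ increase; and that $\bigcup_n A^n=B$ (immediate, every simplex having finite rank). All of the non-trivial content reduces to one inequality, which is the crux: if $y$ is a type~I simplex with child $x=\parent^{-1}(y)$, then every non-degenerate face $w$ of $y$ with $w\neq y,x$ has $r(w)<r(y)$; similarly every proper non-degenerate face of a type~II simplex $x$ has rank $<r(x)$. When $w\in\nond A$ this is clear, and when $w$ is of type~I it is immediate from $w\anc y$ (clause~(a) of the ancestral preorder). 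The delicate case is when $w$ is itself of \emph{type~II}, for then $r(w)=\rho(\parent(w))+1$ and $\parent(w)$ need not even be a face of $y$. Here one uses clause~(b): $\parent(w)\ancpre w\ancpre y$ by transitivity, and $\{\parent(w),y\}$ is not a parent--child pair (ruled out by dimension and by the type~I/type~II partition), so this relation survives the passage from $\ancpre$ to $\anc$, whence $\rho(\parent(w))<\rho(y)$, i.e.\ $r(w)<r(y)$. I expect this point — arranging that the rank function genuinely separates a type~II simplex from the ``boundary data'' of its parent — to be the main obstacle.

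Finally one checks that $A^n\hookrightarrow A^{n+1}$ is a pushout of a coproduct of horn inclusions. The non-degenerate simplices of $A^{n+1}$ outside $A^n$ are exactly the type~II simplices $x$ of rank $n+1$ together with their parents $\parent(x)$ (type~I, same rank). Fix such an $x$, write $y=\parent(x)$, $k=\dim y$, let $i$ be the unique index with $x=d_iy$, and let $\sigma_x\colon\Delta^k\to B$ classify $y$. Because $x=d_iy$ is precisely the face omitted from $\horn^k_i$, and $x$ is a face of $y$ in a unique way, the non-degenerate simplices occurring in the image $\sigma_x(\horn^k_i)$ are non-degenerate faces of $y$ distinct from $y$ and from $x$, hence of rank $\le n$; thus $\sigma_x$ carries $\horn^k_i$ into $A^n$, while it carries $\Delta^k$ into $A^{n+1}$ (the generated subcomplex having non-degenerate simplices $y$, $x$, and faces of rank $\le n$). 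I claim the resulting square
\[
\begin{array}{ccc}
\coprod_{x}\horn^{k}_{i}&\longrightarrow&A^n\\
\downarrow&&\downarrow\\
\coprod_{x}\Delta^{k}&\longrightarrow&A^{n+1}
\end{array}
\]
— coproducts taken over the type~II simplices $x$ of rank $n+1$ — is a pushout. In $\Delta^k$ the only non-degenerate simplices not lying in $\horn^k_i$ are the top cell and its $i$-th face, so adjoining $\Delta^k$ along $\horn^k_i$ introduces exactly the two new non-degenerate simplices $y$ and $x$; as $x$ runs over the index set these are pairwise distinct, $\parent$ being a bijection, and together they exhaust $\nond{A^{n+1}}\setminus\nond{A^n}$. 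Since $\sset$ is a topos the left leg is a monomorphism, so the canonical comparison from the pushout to $A^{n+1}$ is a monomorphism, surjective on non-degenerate simplices; being compatible with Eilenberg--Zilber decompositions it is then a degreewise bijection, i.e.\ an isomorphism. This exhibits the required anodyne presentation, of length at most $\omega$.
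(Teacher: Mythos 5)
Your proof is correct and follows essentially the same route as the paper's: build an $\NN$-valued rank function by well-founded recursion along $\anc$ (with finiteness of the ranks coming from Lemma \ref{lem-finite-pred}), arrange that each type II simplex and its parent receive the same rank, and filter $B$ by rank. The paper's version defines its rank function only on $\nond{A}\cup\children{B}$ and leaves the verification of the conditions of Definition \ref{def-sae} to the reader; your ``crux inequality'' and the degreewise pushout check are exactly the details being elided there.
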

\begin{proof}
Define $F : \children{B} \to \ord$ recursively by
\begin{displaymath}
  F(x) = \sup\{1,F(y) + 1 \mid y \anc x, y \in \children{B}\}.
\end{displaymath}
By $\anc$-induction, each $F(x)$ is a positive integer.
Now we can define $A^n$ by giving its nondegenerate simplices:
\begin{displaymath}
  \nond{(A^n)} = \nond{A} \cup \{x, \parent x \mid x \in \children{B}, F(x) \leq n\}.
\end{displaymath}
It is now easy to check the conditions of \ref{def-sae}.
\end{proof}

Compare the following corollary, which is immediate, with the consequence of the small object argument that every (not necessarily strong) anodyne extension is a retract of a monomorphism which admits a length $\omega$ anodyne presentation.

\begin{corollary}\label{cor-length-omega}
  Every strong anodyne extension admits an anodyne presentation of length $\omega$.
\end{corollary}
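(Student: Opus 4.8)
The plan is to deduce this directly from Proposition~\ref{prop-structure-to-presentation}, once we observe that every anodyne presentation for $m \colon A \hookrightarrow B$ already carries the data of a P-structure on $m$. Given such a P-structure, Proposition~\ref{prop-structure-to-presentation} returns an anodyne presentation indexed by (an ordinal $\leq$) $\omega$; if it stabilises at a finite stage — say $A^N = B$ — we simply continue it by $A^N = A^{N+1} = \dots$ and set $A^\omega = B$, each of the extra inclusions being the identity, i.e.\ the pushout of an empty coproduct of horn inclusions, so that we end up with a genuine presentation having $\kappa = \omega$. Thus it suffices to produce a P-structure from an arbitrary anodyne presentation.

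So fix an anodyne presentation $(A^\alpha)_{\alpha \leq \kappa}$ for $m$. Definition~\ref{def-parent-child} immediately supplies both ingredients asked for in Definition~\ref{def-structure}: the partition $\nond{B}\backslash\nond{A} = \parents{B} \amalg \children{B}$ into type I and type II simplices, and, for each $x \in \children{B}$, the simplex $\parent(x) \in \parents{B}$. We must check that $\parent \colon \children{B} \to \parents{B}$ is a bijection and that the three conditions of Definition~\ref{def-structure} hold; the third of these — well-foundedness of the ancestral relation — is precisely Proposition~\ref{prop-anc-wf} (and, as noted there, depends only on the type I/type II division). For the rest, look at one stage $A^\alpha \hookrightarrow A^{\alpha+1}$, which by hypothesis is a pushout of a coproduct $\coprod_j \bigl( \horn^{n_j}_{k_j} \hookrightarrow \Delta^{n_j} \bigr)$ along some attaching map. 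As explained in the discussion following Remark~\ref{rem-sae}, the non-degenerate simplices of $A^{\alpha+1}$ not already in $A^\alpha$ are exactly, for each $j$, the image $y_j$ of the top cell of $\Delta^{n_j}$ (the type I simplex arising at this stage) and the image $x_j$ of its $k_j$-th face (the unique maximal new face of $y_j$, so $\parent(x_j) = y_j$), and distinct $j$ give distinct pairs; letting $\alpha$ range over all stages, this shows $\parent$ is a bijection onto $\parents{B}$. Since $x_j = d_{k_j} y_j$ we get $\dim \parent(x) = \dim x + 1$ at once. And if $x_j = d_i y_j$ with $i \neq k_j$, then $x_j$ is the image under the attaching map of a non-degenerate face of $\horn^{n_j}_{k_j}$, hence lies in $A^\alpha$, contradicting $x_j \notin \nond{A^\alpha}$; so $k_j$ is the only index with $x_j = d_i y_j$, which is the unique-face condition. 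This exhibits the required P-structure, and Proposition~\ref{prop-structure-to-presentation} then completes the argument.

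The one place that needs actual (if routine) work is the sentence invoking the discussion after Remark~\ref{rem-sae}: one has to pin down that in a pushout of a coproduct of horn inclusions the newly created non-degenerate simplices really do organise themselves into the parent–child pairs claimed — in particular that each $y_j$ and its ``missing'' face $x_j$ remain non-degenerate, distinct, and are not secretly identified with simplices contributed by other horns in the coproduct. Everything past that point is either cited (Propositions~\ref{prop-anc-wf} and~\ref{prop-structure-to-presentation}) or a single line of dimension-counting, so I expect the corollary to be short once that bookkeeping is in place.
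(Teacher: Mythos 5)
Your argument is correct and is exactly the route the paper intends: the discussion around Definition~\ref{def-parent-child} shows that any anodyne presentation determines a P-structure, and Proposition~\ref{prop-structure-to-presentation} then returns a presentation of length $\omega$ (the paper simply declares this ``immediate''). Your extra bookkeeping on the pushout of a coproduct of horn inclusions is the right thing to check and is carried out correctly.
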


We see that anodyne presentations and regular pairings are essentially equivalent.
Indeed, because we will always verify that a pairing is regular by providing an explicit rank function, as in \ref{lem-modanc}, we could mechanically turn any proof using regular pairings into one using anodyne presentations directly.
However, in the examples considered here the pairing seems to be the more natural formulation for both the discovery and exposition of the explicit presentation.

Our formulation of regular pairing leads naturally to a length $\omega$ presentation, since the ancestral relation expresses the connection between a simplex and its lower-dimensional dependencies.
We might restrict the ancestral relation to just those type II simplices of some fixed dimension $n$, and write $\modanc n$ for this relation, and indeed we will actually check regularity of our pairings by Lemma \ref{lem-modanc} below.

\begin{lemma}\label{lem-modanc}
  Let $\parent$ be a proper pairing on $m : A \hookrightarrow B$.
  Then $\parent$ is regular if and only if there exists a function $\phi : \children{B} \to \mathbb N$ such that, for each $n$ and for all type II simplices $x$ and $y$ of dimension $n$, the implication $x \modanc n y \implies \phi(x) < \phi(y)$ holds.
\end{lemma}
\begin{proof}
  Straightforward since any $\anc$-descending chain must be eventually constant in dimension.
\end{proof}

However, the function $\phi$ in \ref{lem-modanc} is not in itself an expression of a length $\omega$ anodyne presentation: one must recursively use face operations to calculate the true rank of a simplex.
We can think of $\phi$ as specifying a length $\omega^2$ anodyne presentation: we use the first $\omega$ steps to attach all dimension 0 type II simplices, the next to attach all dimension 1 type II simplices, etc.
In the following trivial variant of \ref{lem-modanc}, the function $\psi$ may indeed be thought of as specifying a length $\omega$ presentation.
It is worth noting that each use of \ref{lem-modanc} in this paper (see the proofs of \ref{lem-horn-bdy}, \ref{prop-subd-sae}, \ref{thm-ex-sae}, and \ref{thm-proper}) could be replaced by a use of \ref{lem-modanc-var} using the translation $\psi(x) = \phi(x) + \dim x$, which would then give us the length $\omega$ presentation explicitly.
For clarity we give only the simpler proofs here.
\begin{lemma}\label{lem-modanc-var}
  Let $\parent$ be a proper pairing on $m : A \hookrightarrow B$.
  Then $\parent$ is regular if and only if there exists a function $\psi : \children{B} \to \mathbb N$ such that, for all type II simplices $x$ and $y$, the implication $x \anc y \implies \psi(x) < \psi(y)$ holds.
\end{lemma}

Let us demonstrate with a proof of a classical result.
Without the word `strong', the following result appears as proposition IV.2.2 in \cite{GABRIELZISMAN1967}.

\begin{proposition}\label{prop-anodyne-mono}
  Let $m : A \hookrightarrow B$ be a strong anodyne extension and $n : C \hookrightarrow D$ any monomorphism.
  Then
  \begin{displaymath}
    (B \times C) \cup (A \times D) \hookrightarrow B \times D
  \end{displaymath}
  is a strong anodyne extension.
\end{proposition}

This follows from Lemma \ref{lem-horn-bdy} below.
The deduction is essentially the standard argument: one shows that, for fixed $n : C \hookrightarrow D$, the class of monomorphisms $m : A \hookrightarrow B$ making the displayed inclusion strong anodyne is closed under coproducts, pushouts and transfinite composites.
Lemma \ref{lem-horn-bdy} shows that this class contains the horn inclusions (normally the word `strong' is omitted from its statement).
The proof uses the familiar triangulation of a product of simplices in terms of `shuffles' (see, for example, \cite[Definition 6.15]{MAY_simplicialobjects1967})

\begin{lemma}\label{lem-horn-bdy}
  Let $m \geq 1$, $n \geq 0$ and $0 \leq k \leq m$.
  Then
  \begin{displaymath}
    (\triangle^m \times \partial \triangle^n) \cup (\horn^m_k \times \triangle^n) \hookrightarrow \triangle^m \times \triangle^n
  \end{displaymath}
  is a strong anodyne extension.
\end{lemma}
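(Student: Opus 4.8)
The plan is to use the identification $\triangle^m\times\triangle^n = N([m]\times[n])$, under which a non-degenerate $p$-simplex is a strictly increasing chain $v_0 < \dots < v_p$ in the poset $[m]\times[n]$ and a face is a sub-chain. First I would pin down the subcomplex: a chain lies in $\triangle^m\times\partial\triangle^n$ exactly when its projection to $[n]$ is not surjective, and in $\horn^m_k\times\triangle^n$ exactly when its projection to $[m]$ omits some element of $[m]\setminus\{k\}$. So the non-degenerate simplices of $\triangle^m\times\triangle^n$ that are \emph{not} in the subcomplex --- call such a chain \emph{full} --- are precisely those $C$ whose vertices realize every value in $[n]$ in the second coordinate and every value in $[m]\setminus\{k\}$ in the first coordinate; it is on the set of full chains that I would exhibit a P-structure, whence the lemma follows from Proposition~\ref{prop-structure-to-presentation} and Definition~\ref{def-sae}.

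For the pairing, take first $0<k<m$ (the extreme cases $k=0$ and $k=m$ are mirror images of one another and are handled by the same recipe applied at the wall between columns $k-1$ and $k$). Given a full chain $C$, it contains --- because $k+1\in[m]\setminus\{k\}$ --- a least vertex $v$ of first coordinate $k+1$, say $v=(k+1,j)$; let $w$ be the vertex of $C$ immediately below $v$. I would call $C$ a type~I simplex when $w=(k,j)$ and a type~II simplex otherwise, set $\parent(C)=C\cup\{(k,j)\}$ for type~II $C$, and let the partner of a type~I chain be $C\setminus\{(k,j)\}$. The checks are routine: inserting $(k,j)$ into a type~II chain produces a full chain in which $(k,j)$ is now the vertex directly below $v$, so the result is type~I; deleting $(k,j)$ from a type~I chain produces a full chain, second-coordinate surjectivity surviving because $v$ already witnesses the value $j$; the presence of $(k,j)$ in $C$ in fact forces $C$ to be type~I, so the two operations are mutually inverse; hence $\parent$ is a bijection $\children{B}\to\parents{B}$ with $\dim\parent(x)=\dim x+1$ and $x=d_i\parent(x)$ for the unique index $i$ recording the position of $(k,j)$.

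The delicate clause of Definition~\ref{def-structure} --- and the step I expect to be the main obstacle --- is well-foundedness of the ancestral relation. Ranking a full chain simply by its dimension does not work: a type~II chain sits one dimension below its parent yet is a face of it, and although every parent--child pair is deleted from $\anc$, one must still rule out ancestral relations that are only witnessed by chains threading through such pairs. I would handle this with a refined rank --- the dimension together with a lower-order term recording the positions of the vertices lying in column $k$ --- and a short case analysis of the non-trivial ancestral relations; the delicate case is that of a full chain $y$ which is a codimension-one face of $\parent(z)$ for a type~II chain $z\neq y$, where one must verify that $y$ is itself, \emph{ancestrally}, either strictly below $z$ or unrelated to it (so that the rank function underlying Proposition~\ref{prop-structure-to-presentation} is genuinely well defined). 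Granting well-foundedness, the three bullet points of Definition~\ref{def-structure} hold, Proposition~\ref{prop-structure-to-presentation} upgrades the P-structure to an anodyne presentation of length $\omega$, and the lemma follows. The pairing itself is immediate to write down; the care goes entirely into certifying its acyclicity.
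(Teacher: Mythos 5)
Your construction is essentially the paper's own proof in chain language: identifying walks with strictly increasing chains in $[m]\times[n]$, your dichotomy (is the vertex immediately preceding the first vertex with first coordinate $k+1$ equal to $(k,j)$?) coincides with the paper's condition on where the walk crosses from row $k$ to row $k+1$, and your $\parent$ (insert $(k,j)$) is the paper's move-splitting, so the proposal is correct and takes the same route. For the well-foundedness step you leave as a sketch, the lexicographic rank $(\dim x,\ \#\{\text{vertices of }x\text{ with first coordinate }k+1\})$, extended to type I simplices by giving $\parent(x)$ the rank of $x$, does the job: a short check shows the only maximal face of $\parent(x)$ other than $x$ that can again be a full type II chain of the same dimension is $\parent(x)\setminus\{(k+1,j)\}$, which has one fewer vertex with first coordinate $k+1$, and every other generating instance of $\anc$ strictly lowers the leading term.
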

\begin{proof}
  To begin with assume $0 \leq k < m$.
  The nondegenerate simplices of $B = \triangle^m \times \triangle^n$ may be identified with sequences $(\mu_i,\nu_i) \in \{0,\ldots,m\} \times \{0,\ldots,n\}$ of some length $N$, such that for $i < N$ we have $\mu_i \leq \mu_{i+1}$ and $\nu_i \leq \nu_{i+1}$ with at least one of these inequalities strict.
  The $i$\textsuperscript{th} face map is given by omitting the $i$\textsuperscript{th} point in the `walk' (counting from 0).
  If we think of $\{0,\ldots,m\}$ as labelling columns and $\{0,\ldots,n\}$ as labelling rows, then such a walk in the grid represents a simplex:
  \begin{itemize}
  \item in $\triangle^m \times \partial \triangle^n \subseteq \triangle^m \times \triangle^n$ if and only if some row is skipped,
  \item in $\horn^m_k \times \triangle^n \subseteq \triangle^m \times \triangle^n$ if and only if some column other than the $k$\textsuperscript{th} is skipped.
  \end{itemize}

  Consider a nondegenerate simplex $x \in \nond{B} \backslash \nond{A}$, where $A = (\triangle^m \times \partial \triangle^n) \cup (\horn^m_k \times \triangle^n)$.
  Every `move' in the walk is either $(+1,0)$, $(0,+1)$, $(+1,+1)$, or one of $(+2,0)$ and $(+2,+1)$ where the $k$\textsuperscript{th} column is skipped.
  Let us declare that $x$ is of type I if and only if at least one point of the walk is on the $k$\textsuperscript{th} column and moreover the last point of the walk which is on the $k$\textsuperscript{th} column is followed by a move of the form $(+1,0)$.
  In this situation, we pair $x$ with the simplex given by deleting the last point of the walk which is on the $k$\textsuperscript{th} column.
  In the case $0 < k < m$, we can describe this pairing with reference to Figure \ref{figure-middle-k}: we pair a simplex of the form \subref{subfig-r-r} with the simplex of the form \subref{subfig-rr} which agrees with it everywhere else, and similarly for the pairs \subref{subfig-r-u-r}-\subref{subfig-r-ur}, \subref{subfig-ur-r}-\subref{subfig-urr}, \subref{subfig-r-u-ell-u-r}-\subref{subfig-r-u-ell-ur}, and \subref{subfig-ur-ell-u-r}-\subref{subfig-ur-ell-ur}.
  In the case $k = 0$, we can describe the pairing with reference to Figure \ref{figure-k-0}: the pairs are of the form \subref{subfig-u-r}-\subref{subfig-ur} and \subref{subfig-}-\subref{subfig-r}

  This gives us a pairing, which is clearly proper in the sense of \ref{def-proper-pairing}.
  Let us check that it is regular.
  For a type II simplex $x \in \children{B}$, define $\phi(x)$ to be the number of points on the walk corresponding to $x$ which are on the $(k+1)$\textsuperscript{th} column.
  Suppose $x \modanc n y$.
  Then as $x$ is of type II it must be obtained from $\parent y$ by dropping either the last point on the $k$\textsuperscript{th} column or the first point on the $(k+1)$\textsuperscript{th} column.
  Since $x \anc y$ implies $x \neq y$ we must be in the latter situation, in which case $\phi(x) < \phi(y)$.

  To do the case $k = m$, note that we could just as well have proved the case $0 < k \leq m$ by looking at the \emph{first} point of a walk on the $k$\textsuperscript{th} column and checking whether it is \emph{preceded} by a move $(+1,0)$, etc.
  This gives two different pairings in the case $0 < k < m$.
\end{proof}

\begin{figure}[h]\centering%
  \begin{minipage}[t]{0.5\linewidth}
    \centering
    \begin{tikzpicture}
      \begin{scope}
        \draw[black!50!white,dotted] (-1.5,-1.5) grid (1.5,1.5);
        \draw[black!75!white,dashdotted,thick] (0,1.5) -- (0,-1.5);

        \draw[fill] (-1,0) circle (0.1);
        \draw[fill] (0,0) circle (0.1);
        \draw[fill] (1,0) circle (0.1);

        \draw (-1,0) -- (1,0);
        \draw[dash pattern=on 0.5mm off 1mm] (-1,0) -- +(-0.38,-0.38)
        (-1,0) -- +(0,-0.5)
        (-1,0) -- +(-0.5,0)

        (1,0) -- +(0.38,0.38)
        (1,0) -- +(0,0.5)
        (1,0) -- +(0.5,0)
        ;
      \end{scope}
    \end{tikzpicture}
    \subcaption{}\label{subfig-r-r}
  \end{minipage}%
  \begin{minipage}[t]{0.5\linewidth}
    \centering%
    \begin{tikzpicture}
      \begin{scope}
        \draw[black!50!white,dotted] (-1.5,-1.5) grid (1.5,1.5);
        \draw[black!75!white,dashdotted,thick] (0,1.5) -- (0,-1.5);

        \draw[fill] (-1,0) circle (0.1);
        \draw[fill] (1,0) circle (0.1);

        \draw (-1,0) -- (1,0);
        \draw[dash pattern=on 0.5mm off 1mm] (-1,0) -- +(-0.38,-0.38)
        (-1,0) -- +(0,-0.5)
        (-1,0) -- +(-0.5,0)

        (1,0) -- +(0.38,0.38)
        (1,0) -- +(0,0.5)
        (1,0) -- +(0.5,0)
        ;
      \end{scope}
    \end{tikzpicture}
    \subcaption{}\label{subfig-rr}
  \end{minipage}%
  \vskip 0.5cm
  \begin{minipage}[t]{0.25\linewidth}
    \centering%
    \begin{tikzpicture}
      \begin{scope}
        \draw[black!50!white,dotted] (-1.5,-0.8) grid (1.5,1.8);
        \draw[black!75!white,dashdotted,thick] (0,1.8) -- (0,-0.8);

        \draw[fill] (-1,0) circle (0.1);
        \draw[fill] (0,1) circle (0.1);
        \draw[fill] (0,0) circle (0.1);
        \draw[fill] (1,1) circle (0.1);

        \draw (-1,0) -- (0,0) -- (0,1) -- (1,1);
        \draw[dash pattern=on 0.5mm off 1mm] (-1,0) -- +(-0.38,-0.38)
        (-1,0) -- +(0,-0.5)
        (-1,0) -- +(-0.5,0)

        (1,1) -- +(0.38,0.38)
        (1,1) -- +(0,0.5)
        (1,1) -- +(0.5,0)
        ;
      \end{scope}
    \end{tikzpicture}
    \subcaption{}\label{subfig-r-u-r}
  \end{minipage}%
  \begin{minipage}[t]{0.25\linewidth}
    \centering%
    \begin{tikzpicture}
      \begin{scope}
        \draw[black!50!white,dotted] (-1.5,-0.8) grid (1.5,1.8);
        \draw[black!75!white,dashdotted,thick] (0,1.8) -- (0,-0.8);

        \draw[fill] (-1,0) circle (0.1);
        \draw[fill] (0,0) circle (0.1);
        \draw[fill] (1,1) circle (0.1);

        \draw (-1,0) -- (0,0) -- (1,1);
        \draw[dash pattern=on 0.5mm off 1mm] (-1,0) -- +(-0.38,-0.38)
        (-1,0) -- +(0,-0.5)
        (-1,0) -- +(-0.5,0)

        (1,1) -- +(0.38,0.38)
        (1,1) -- +(0,0.5)
        (1,1) -- +(0.5,0)        ;
      \end{scope}
    \end{tikzpicture}
    \subcaption{}\label{subfig-r-ur}
  \end{minipage}%
  \begin{minipage}[t]{0.25\linewidth}
    \centering%
    \begin{tikzpicture}
      \begin{scope}
        \draw[black!50!white,dotted] (-1.5,-0.8) grid (1.5,1.8);
        \draw[black!75!white,dashdotted,thick] (0,1.8) -- (0,-0.8);

        \draw[fill] (-1,0) circle (0.1);
        \draw[fill] (0,1) circle (0.1);
        \draw[fill] (1,1) circle (0.1);

        \draw (-1,0) -- (0,1) -- (1,1);
        \draw[dash pattern=on 0.5mm off 1mm] (-1,0) -- +(-0.38,-0.38)
        (-1,0) -- +(0,-0.5)
        (-1,0) -- +(-0.5,0)

        (1,1) -- +(0.38,0.38)
        (1,1) -- +(0,0.5)
        (1,1) -- +(0.5,0)        ;
      \end{scope}
    \end{tikzpicture}
    \subcaption{}\label{subfig-ur-r}
  \end{minipage}%
  \begin{minipage}[t]{0.25\linewidth}
    \centering%
    \begin{tikzpicture}
      \begin{scope}
        \draw[black!50!white,dotted] (-1.5,-0.8) grid (1.5,1.8);
        \draw[black!75!white,dashdotted,thick] (0,1.8) -- (0,-0.8);

        \draw[fill] (-1,0) circle (0.1);
        \draw[fill] (1,1) circle (0.1);

        \draw (-1,0) -- (1,1);
        \draw[dash pattern=on 0.5mm off 1mm] (-1,0) -- +(-0.38,-0.38)
        (-1,0) -- +(0,-0.5)
        (-1,0) -- +(-0.5,0)

        (1,1) -- +(0.38,0.38)
        (1,1) -- +(0,0.5)
        (1,1) -- +(0.5,0)        ;
      \end{scope}
    \end{tikzpicture}
    \subcaption{}\label{subfig-urr}
  \end{minipage}
  \vskip 0.5cm
  \begin{minipage}[t]{0.25\linewidth}
    \centering%
    \begin{tikzpicture}
      \begin{scope}
        \draw[black!50!white,dotted] (-1.5,-0.2) grid (1.5,1.8);
        \draw[black!75!white,dashdotted,thick] (0,1.8) -- (0,0);

        \draw[fill] (0,0) circle (0.1);
        \draw[fill] (0,1) circle (0.1);
        \draw[fill] (1,1) circle (0.1);

        \draw (0,-0.2) -- (0,0) -- (0,1) -- (1,1);
        \draw[dash pattern=on 0.5mm off 1mm]
        (1,1) edge +(0.38,0.38)
        edge +(0,0.5)
        edge +(0.5,0);

        \draw (0,-0.35) node[] {.};
        \draw (0,-0.5) node[] {.};
        \draw (0,-0.65) node[] {.};
      \end{scope}
      \begin{scope}[yshift=-2cm]
        \draw[black!50!white,dotted] (-1.5,-0.8) grid (1.5,1.2);
        \draw[black!75!white,dashdotted,thick] (0,1) -- (0,-0.8);

        \draw[fill] (-1,0) circle (0.1);
        \draw[fill] (0,0) circle (0.1);
        \draw[fill] (0,1) circle (0.1);

        \draw (-1,0) -- (0,0) -- (0,1) -- (0,1.2);
        \draw[dash pattern=on 0.5mm off 1mm]
        (-1,0) edge +(-0.38,-0.38)
        edge +(0,-0.5)
        edge +(-0.5,0);
      \end{scope}
    \end{tikzpicture}
    \subcaption{}\label{subfig-r-u-ell-u-r}
  \end{minipage}%
  \begin{minipage}[t]{0.25\linewidth}
    \centering%
    \begin{tikzpicture}
      \begin{scope}
        \draw[black!50!white,dotted] (-1.5,-0.2) grid (1.5,1.8);
        \draw[black!75!white,dashdotted,thick] (0,1.8) -- (0,0);

        \draw[fill] (0,0) circle (0.1);
        \draw[fill] (1,1) circle (0.1);

        \draw (0,-0.2) -- (0,0) -- (1,1);
        \draw[dash pattern=on 0.5mm off 1mm]
        (1,1) edge +(0.38,0.38)
        edge +(0,0.5)
        edge +(0.5,0);

        \draw (0,-0.35) node[] {.};
        \draw (0,-0.5) node[] {.};
        \draw (0,-0.65) node[] {.};
      \end{scope}
      \begin{scope}[yshift=-2cm]
        \draw[black!50!white,dotted] (-1.5,-0.8) grid (1.5,1.2);
        \draw[black!75!white,dashdotted,thick] (0,1) -- (0,-0.8);

        \draw[fill] (-1,0) circle (0.1);
        \draw[fill] (0,0) circle (0.1);
        \draw[fill] (0,1) circle (0.1);

        \draw (-1,0) -- (0,0) -- (0,1) -- (0,1.2);
        \draw[dash pattern=on 0.5mm off 1mm]
        (-1,0) edge +(-0.38,-0.38)
        edge +(0,-0.5)
        edge +(-0.5,0);
      \end{scope}
    \end{tikzpicture}
    \subcaption{}\label{subfig-r-u-ell-ur}
  \end{minipage}%
  \begin{minipage}[t]{0.25\linewidth}
    \centering%
    \begin{tikzpicture}
      \begin{scope}
        \draw[black!50!white,dotted] (-1.5,-0.2) grid (1.5,1.8);
        \draw[black!75!white,dashdotted,thick] (0,1.8) -- (0,0);

        \draw[fill] (0,0) circle (0.1);
        \draw[fill] (0,1) circle (0.1);
        \draw[fill] (1,1) circle (0.1);

        \draw (0,-0.2) -- (0,0) -- (0,1) -- (1,1);
        \draw[dash pattern=on 0.5mm off 1mm]
        (1,1) edge +(0.38,0.38)
        edge +(0,0.5)
        edge +(0.5,0);

        \draw (0,-0.35) node[] {.};
        \draw (0,-0.5) node[] {.};
        \draw (0,-0.65) node[] {.};
      \end{scope}
      \begin{scope}[yshift=-2cm]
        \draw[black!50!white,dotted] (-1.5,-0.8) grid (1.5,1.2);
        \draw[black!75!white,dashdotted,thick] (0,1) -- (0,-0.8);

        \draw[fill] (-1,0) circle (0.1);
        \draw[fill] (0,1) circle (0.1);

        \draw (-1,0) -- (0,1) -- (0,1.2);
        \draw[dash pattern=on 0.5mm off 1mm]
        (-1,0) edge +(-0.38,-0.38)
        edge +(0,-0.5)
        edge +(-0.5,0);
      \end{scope}
    \end{tikzpicture}
    \subcaption{}\label{subfig-ur-ell-u-r}
  \end{minipage}%
  \begin{minipage}[t]{0.25\linewidth}
    \centering%
    \begin{tikzpicture}
      \begin{scope}
        \draw[black!50!white,dotted] (-1.5,-0.2) grid (1.5,1.8);
        \draw[black!75!white,dashdotted,thick] (0,1.8) -- (0,0);

        \draw[fill] (0,0) circle (0.1);
        \draw[fill] (1,1) circle (0.1);

        \draw (0,-0.2) -- (0,0) -- (1,1);
        \draw[dash pattern=on 0.5mm off 1mm]
        (1,1) edge +(0.38,0.38)
        edge +(0,0.5)
        edge +(0.5,0);

        \draw (0,-0.35) node[] {.};
        \draw (0,-0.5) node[] {.};
        \draw (0,-0.65) node[] {.};
      \end{scope}
      \begin{scope}[yshift=-2cm]
        \draw[black!50!white,dotted] (-1.5,-0.8) grid (1.5,1.2);
        \draw[black!75!white,dashdotted,thick] (0,1) -- (0,-0.8);

        \draw[fill] (-1,0) circle (0.1);
        \draw[fill] (0,1) circle (0.1);

        \draw (-1,0) -- (0,1) -- (0,1.2);
        \draw[dash pattern=on 0.5mm off 1mm]
        (-1,0) edge +(-0.38,-0.38)
        edge +(0,-0.5)
        edge +(-0.5,0);
      \end{scope}
    \end{tikzpicture}
    \subcaption{}\label{subfig-ur-ell-ur}
  \end{minipage}
  \caption{The possible behaviours at the $k$\textsuperscript{th} column of a nondegenerate simplex $x \in \nond{B}\backslash\nond{A}$ from the proof of Lemma \ref{lem-horn-bdy}, in the case $0 < k < m$.}
  \label{figure-middle-k}
\end{figure}

\begin{figure}[h]\centering%
  \begin{minipage}[t]{0.25\linewidth}
    \centering%
    \begin{tikzpicture}
      \begin{scope}
        \draw[black!50!white,dotted] (0,-1.2) grid (1.5,0.8);
        \draw[black!75!white,dashdotted,thick] (0,0.8) -- (0,-1);

        \draw[fill] (1,0) circle (0.1);
        \draw[fill] (0,0) circle (0.1);
        \draw[fill] (0,-1) circle (0.1);

        \draw (0,-1.2) -- (0,-1) -- (0,0) --(1,0);
        \draw (0,-1.35) node[] {.};
        \draw (0,-1.5) node[] {.};
        \draw (0,-1.65) node[] {.};
        \draw[dash pattern=on 0.5mm off 1mm]
        (1,0) edge +(0.38,0.38)
        edge +(0,0.5)
        edge +(0.5,0)
        ;
      \end{scope}
      \begin{scope}[yshift=-2cm]
        \draw[black!50!white,dotted] (0,0) grid (1.5,0.2);

        \draw[fill] (0,0) circle (0.1);

        \draw (0,0) -- (0,0.2);
        ;
      \end{scope}
    \end{tikzpicture}
    \subcaption{}\label{subfig-u-r}
  \end{minipage}%
  \begin{minipage}[t]{0.25\linewidth}
    \centering%
    \begin{tikzpicture}
      \begin{scope}
        \draw[black!50!white,dotted] (0,-1.2) grid (1.5,0.8);
        \draw[black!75!white,dashdotted,thick] (0,0.8) -- (0,-1);

        \draw[fill] (1,0) circle (0.1);
        \draw[fill] (0,-1) circle (0.1);

        \draw (0,-1.2) -- (0,-1) -- (1,0);
        \draw (0,-1.35) node[] {.};
        \draw (0,-1.5) node[] {.};
        \draw (0,-1.65) node[] {.};
        \draw[dash pattern=on 0.5mm off 1mm]
        (1,0) edge +(0.38,0.38)
        edge +(0,0.5)
        edge +(0.5,0)
        ;
      \end{scope}
      \begin{scope}[yshift=-2cm]
        \draw[black!50!white,dotted] (0,0) grid (1.5,0.2);

        \draw[fill] (0,0) circle (0.1);

        \draw (0,0) -- (0,0.2);
        ;
      \end{scope}
    \end{tikzpicture}
    \subcaption{}\label{subfig-ur}
  \end{minipage}%
  \begin{minipage}[t]{0.25\linewidth}
    \centering%
    \begin{tikzpicture}
      \begin{scope}
        \draw[black!50!white,dotted] (0,0) grid (1.5,1.5);
        \draw[black!75!white,dashdotted,thick] (0,1.5) -- (0,0);

        \draw[fill] (1,0) circle (0.1);
        \draw[fill] (0,0) circle (0.1);

        \draw (0,0) -- (1,0);
        \draw[dash pattern=on 0.5mm off 1mm]
        (1,0) edge +(0.38,0.38)
        edge +(0,0.5)
        edge +(0.5,0)
        ;
      \end{scope}
    \end{tikzpicture}
    \subcaption{}\label{subfig-r}
  \end{minipage}%
  \begin{minipage}[t]{0.25\linewidth}
    \centering%
    \begin{tikzpicture}
      \begin{scope}
        \draw[black!50!white,dotted] (0,0) grid (1.5,1.5);
        \draw[black!75!white,dashdotted,thick] (0,1.5) -- (0,0);

        \draw[fill] (1,0) circle (0.1);

        \draw[dash pattern=on 0.5mm off 1mm]
        (1,0) edge +(0.38,0.38)
        edge +(0,0.5)
        edge +(0.5,0)
        ;
      \end{scope}
    \end{tikzpicture}
    \subcaption{}\label{subfig-}
  \end{minipage}
  \caption{The possible behaviours at the $0$\textsuperscript{th} column of a nondegenerate simplex $x \in \nond{B}\backslash\nond{A}$ from the proof of Lemma \ref{lem-horn-bdy}, in the case $k = 0$.}
  \label{figure-k-0}
\end{figure}

\begin{remark}
  In general, there may be many different pairings on the inclusion $(\triangle^m \times \partial \triangle^n) \cup (\horn^m_k \times \triangle^n) \hookrightarrow \triangle^m \times \triangle^n$.
  We can be more precise about the number of pairings if we count only a particular kind: by a \emph{based pairing} we mean a pairing on $(\triangle^m \times \partial \triangle^n) \cup (\horn^m_k \times \triangle^n) \hookrightarrow \triangle^m \times \triangle^n$ such that for any type II simplex $x$, if $d_i(\parent x) = x$ then the $i$\textsuperscript{th} vertex of $\parent x$ is $(k,a) \in \triangle^m \times \triangle^n$ for some $a$.
  In terms of walks, this means that operation $\parent$ adds an extra point on the $k$\textsuperscript{th} column.

  In the case $k = 0$ (and $k = m$), our proof of Lemma \ref{lem-horn-bdy} constructs the unique based pairing.
  To see this, observe that the minimal simplices of $\nond{B}\backslash\nond{A}$ are precisely those of the form \ref{subfig-ur} or \ref{subfig-}, and hence these must be of type II.
  In each case there is a unique simplex given by adding a point on the $0$\textsuperscript{th} column, and these have the forms \ref{subfig-u-r} and \ref{subfig-r} respectively.
  Pairing these exhausts all simplices, hence the based pairing is unique.

  In the case $0 < k < m$, the minimal simplices are those of the form \ref{subfig-rr}, \ref{subfig-urr}, or \ref{subfig-ur-ell-ur}.
  In the case \ref{subfig-rr}, there is no choice but to pair with a simplex of the form \ref{subfig-r-r}.
  In the case \ref{subfig-urr}, there is a choice between two simplices with forms \ref{subfig-r-ur} and \ref{subfig-ur-r}, but it is straightforward to see that whichever of the two not chosen must therefore be of type II and be paired with a simplex of the form \ref{subfig-r-u-r}.
  Similarly for \ref{subfig-ur-ell-ur}: we pair with either \ref{subfig-r-u-ell-ur} or \ref{subfig-ur-ell-u-r} and the remaining one must be paired with \ref{subfig-r-u-ell-u-r}.
  This exhausts all simplices, hence each based pairing amounts to a binary choice for every simplex of the form \ref{subfig-urr} or \ref{subfig-ur-ell-ur} in $\nond{B}\backslash\nond{A}$.
  Every such set of choices gives a regular pairing, which we can see by taking $\phi(x)$ to be the number of points on the walk corresponding to $x$ which are on either the $(k-1)$\textsuperscript{th} or $(k+1)$\textsuperscript{th} columns.
  Hence the number of based regular pairings can be calculated as $2^{L(m,n,k)}$, where $L(m,n,k)$ is the number of such simplices.
  It is straightforward to compute $L$ and to see that it is in fact independent of $k$.
\end{remark}

We present a second example in the use of regular pairings.
For the definition of the subdivision functor see \ref{def-exinf} below.

\begin{proposition}\label{prop-subd-sae}
  The subdivision functor $\sd : \sset \to \sset$ preserves strong anodyne extensions.
\end{proposition}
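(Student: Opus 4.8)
The plan is to exploit that $\sd$ is a left adjoint — namely, left adjoint to $\ex$ — so that it preserves all colimits; it also preserves monomorphisms, hence sends subcomplexes to subcomplexes. I would first record the elementary closure properties of the class of strong anodyne extensions: it is closed under coproducts (interleave length-$\omega$ presentations, via \ref{cor-length-omega}), under pushout (transport a presentation along the pushout, using that monomorphisms are stable under pushout in $\sset$), and under transfinite composition (concatenate presentations). Then, given an anodyne presentation $(A^\alpha)_{\alpha\le\kappa}$ of some $m\colon A\hookrightarrow B$, applying $\sd$ produces an increasing family $(\sd A^\alpha)_{\alpha\le\kappa}$ of subcomplexes of $\sd B$, continuous at limits, with $\sd A^0=\sd A$ and $\sd A^\kappa=\sd B$, in which each step $\sd A^\alpha\hookrightarrow\sd A^{\alpha+1}$ is a pushout of a coproduct of maps of the form $\sd(\horn^n_k\hookrightarrow\triangle^n)$. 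By the closure properties the proposition reduces to showing that each such $\sd(\horn^n_k\hookrightarrow\triangle^n)$ is a strong anodyne extension, and I would do this by exhibiting a P-structure.

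For that I would identify $\sd\triangle^n$ with the nerve of the poset $P$ of non-empty subsets of $[n]=\{0,\ldots,n\}$ ordered by inclusion, so that its non-degenerate simplices are the chains $S_0\subsetneq\cdots\subsetneq S_p$ in $P$. Writing $R:=[n]\setminus\{k\}$, a short check shows that $\sd\horn^n_k$ is the nerve of the full subposet $P\setminus\{[n],R\}$, so the non-degenerate simplices of $\sd\triangle^n$ absent from $\sd\horn^n_k$ are exactly the chains whose top term is $R$ or $[n]$. I would sort these into three families: (i) the chains with top term $R$; (ii) the chains obtained from an (i)-chain by adjoining $[n]$ as a new top term; and (iii) the rest, i.e.\ the chains with top term $[n]$ whose penultimate term is not $R$ — equivalently the chains $c'\cup\{[n]\}$ with $c'$ either empty or a non-degenerate simplex of $\sd\horn^n_k$ (deleting $[n]$ leaves a chain with top term $\neq R$).

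The P-structure I have in mind pairs family (i) with family (ii) by adjoining and deleting $[n]$: each (i)-chain $c$ becomes a type II simplex with $\parent(c)=c\cup\{[n]\}$, of which it is a face via deletion of the top vertex, dimensions differing by one. For family (iii) I would use that $\horn^n_k$ is a cone — the vertex $k$ lies in every facet of $\horn^n_k$ — hence collapsible, and that barycentric subdivision preserves collapsibility, so $\sd\horn^n_k$ collapses onto a single vertex $v$; such a collapse amounts to an acyclic matching pairing every non-degenerate simplex of $\sd\horn^n_k$ other than $v$ with a face or coface of adjacent dimension. Augmenting this matching by pairing the empty chain with $v$, and transporting it along $c'\mapsto c'\cup\{[n]\}$, pairs up the (iii)-chains, each again a face of its parent (via deletion of a vertex other than $[n]$) in a unique way, with dimensions differing by one.

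The substantive point — the one I expect to be the main obstacle, not because it is deep but because the two pieces of the matching must be shown not to conspire — is well-foundedness of the ancestral relation this data determines; since $\sd\triangle^n$ is finite, this is equivalent to the combined matching being acyclic. The relevant observations are: a family-(ii) simplex is type I, and its only face that is a type-II simplex lying in families (i)–(ii) is its matched partner, so any ancestral chain passing through it must continue into family (iii), via the face obtained by deleting the penultimate term $R$; and once in family (iii) an ancestral chain either stays there or reaches $\sd\horn^n_k$ and stops (deleting $[n]$ from a family-(iii) simplex lands in $\sd\horn^n_k$, while deleting any other vertex preserves the top term $[n]$). Consequently any cycle would have to live entirely among the family-(iii) simplices, contradicting the acyclicity inherited from the chosen collapse of $\sd\horn^n_k$. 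Apart from this, everything is routine bookkeeping of the kind already carried out in \ref{lem-horn-bdy}.
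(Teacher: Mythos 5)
Your reduction to the single generating case $\sd(\horn^n_k\hookrightarrow\triangle^n)$ is exactly the paper's first step, and your pairing of the chains with top term $R=[n]\setminus\{k\}$ against those obtained by adjoining $[n]$ coincides with the paper's pairing of its families (e) and (f). Where you genuinely diverge is in your family (iii): the paper disposes of these chains by a second \emph{explicit} pairing (its families (a)--(d)), matching a chain all of whose terms contain $k$ with the chain obtained by prepending $\{k\}$, and otherwise toggling the term $\rho_q\cup\{k\}$ where $\rho_q$ is the largest term not containing $k$; you instead transport an acyclic matching arising from a collapse of $\sd\horn^n_k$ along $c'\mapsto c'\cup\{[n]\}$. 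Your analysis of why the two pieces of the matching cannot conspire is essentially right, with one correction: a family-(ii) simplex can have many type II faces in family (i) besides its matched partner (e.g.\ $(R)$ is a face of $(S,R,[n])$), so your claim holds only for \emph{codimension-one} faces. This is harmless --- in any cycle of the ancestral relation the number of parent steps is at least the number of face steps by non-adjacency of parent steps and at most that number by counting dimensions, which forces every face step in a cycle to be codimension one --- but the equivalence between well-foundedness of $\anc$ and acyclicity of the codimension-one matching deserves a sentence.

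The genuine soft spot is the input to the family-(iii) matching. That $\horn^n_k$ is collapsible is clear, but ``barycentric subdivision preserves collapsibility'' is a nontrivial theorem (Whitehead) which you neither prove nor cite, and whose standard proof is a combinatorial construction of essentially the same nature and difficulty as the explicit matching you are trying to avoid; indeed the paper's families (a)--(d) are, in effect, precisely an explicit acyclic matching of the kind your collapse is supposed to supply (extended across the two missing cells $R$ and $[n]$). As written, then, the proof outsources its combinatorial core to an unproved lemma that is at least as strong as what remains to be shown. To close the gap, either cite a proof of the subdivision-preserves-collapses theorem, or --- better, and in the spirit of the paper and of your own routine verifications in \ref{lem-horn-bdy} --- write down the acyclic matching on $\sd\horn^n_k$ explicitly, for instance by the same $\{k\}$/$\rho_q\cup\{k\}$ toggle that the paper uses.
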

\begin{proof}
  Since $\sd$ is a left adjoint, it suffices to show that any horn inclusion is sent to a strong anodyne extension.
  Assume $n \geq 2$ as otherwise the proposition is trivial and for notational convenience assume $k = 0$, so we consider $\sd(\horn^n_0 \hookrightarrow \triangle^n)$.
  Thinking of the nondegenerate simplices of $\sd\triangle^n$ as strictly increasing sequences of non-empty subsets of $\{0,1,\ldots,n\}$, those not in $\sd \horn_0^n$ are those sequences which contain $\{1,\ldots,n\}$ or $\{0,1,\ldots,n\}$.

  Let us say that a simplex $(\sigma_1,\ldots,\sigma_r)$ in $\nond{(\sd \triangle^n)} \backslash \nond{(\sd \horn_0^n)}$ is of type I if and only if $\sigma_{i+1} = \sigma_i \cup \{0\}$ for some $0 \leq i < r$ where for notational purposes $\sigma_0 = \varnothing$.
  The corresponding type II simplex will be $(\sigma_1,\ldots,\widehat{\sigma_{i+1}},\ldots,\sigma_r)$, i.e.\ the same sequence but with its $(i+1)$\textsuperscript{th} element dropped.
  This defines a pairing which is clearly proper.

  Let us check that the pairing is regular.
  For a type II simplex $x = (\sigma_1,\ldots,\sigma_r)$, let $\phi(x)$ be the number of $i$ such that $0 \notin \sigma_i$.
  Now suppose that $x \modanc r y$.
  Then writing $y = (\tau_1,\ldots,\tau_a,\tau_{a+1},\ldots,\tau_r)$ where $0 \in \tau_{a+1} \backslash \tau_a$, we may write $\parent y$ as $(\tau_1,\ldots,\tau_a,\tau_a\cup\{0\},\tau_{a+1},\ldots,\tau_r)$.
  Then since $x$ is of type II it is given either by dropping $\tau_a$ or $\tau_{a}\cup\{0\}$ from $\parent y$, but since $x \neq y$ we are in the former situation, hence $\phi(x) < \phi(y)$.
\end{proof}

\section{Kan Fibrant Replacement}\label{sec-exinf}

Let us recall some definitions.
An account of $\exinf$ can be found in III.4 of \cite{GOERSSJARDINE1999}.

\begin{definition}\label{def-exinf}
  Recall that there is a functor $N : \mathsf{Cat} \to \sset$ sending a category to its \emph{nerve}.
  The standard simplex $\triangle^n$ arises as $N([n])$.
  Recall also that for the standard simplex $\triangle^n$ the set $\nond{(\triangle^n)}$ is naturally a poset and is isomorphic to the poset of non-empty subsets of $\{0,1,\ldots,n\}$.
  We define \emph{subdivision} $\sd : \Delta \to \sset$ by
  \begin{displaymath}
    \sd([n]) = N( \nond{(\triangle^n)}).
  \end{displaymath}

  We may extend subdivision to a functor $\sset \to \sset$ by left Kan extension.
  This functor has a right adjoint, called \emph{extension}, given by
  \begin{displaymath}
    (\ex X)_n = \sset(\sd \triangle^n,X).
  \end{displaymath}
  We think of this as the collection of all `binary pasting diagrams' in $X$ and we may refer to elements of $\ex X$ as \emph{(pasting) diagrams}.

  We also need the \emph{last-vertex map} $\bar\jmath_n : \sd \triangle^n \to \triangle^n$, the nerve of the map $\nond{(\triangle^n)} \to [n]$ given by
  \begin{displaymath}
    \{0,1,\ldots,n\} \supseteq \sigma \mapsto \max \sigma.
  \end{displaymath}
  The family $(\bar\jmath_n)$ is easily seen to be a natural transformation of functors $\Delta \to \sset$, and hence gives rise to a map $j_X : X \to \ex X$, where an $n$-simplex $\sigma : \triangle^n \to X$ is sent to $(\sigma \circ \bar\jmath_n : \sd \triangle^n \to X) \in (\ex X)_n$.
  This gives a natural transformation $\id \to \ex$.
  Finally, we define $\exinf X$ to be the colimit of the sequential diagram
  \begin{displaymath}
    \begin{tikzpicture}
      \node (a) at (0,0) {$X$};
      \node (b) at (3,0) {$\ex X$};
      \node (c) at (6,0) {$\ex (\ex X)$};
      \node (d) at (9,0) {\ldots};
      \draw[->,font=\footnotesize] (a) -- node[auto] {$j_X$} (b);
      \draw[->,font=\footnotesize] (b) -- node[auto] {$j_{\ex X}$} (c);
      \draw[->,font=\footnotesize] (c) -- node[auto] {$j_{\ex (\ex X)}$} (d);
    \end{tikzpicture}
  \end{displaymath}
  There is a map $X \to \exinf X$ natural in $X$ associated to the colimit, which we may also denote by $j_X$ when necessary.
\end{definition}

The main contribution in this section is a new proof that the map $j : X \to \ex X$ is a weak homotopy equivalence and indeed a strong anodyne extension.
Combined with the following classical result, this means that $\exinf$ is a functorial fibrant replacement for the Kan-Quillen model structure on simplicial sets.

\begin{proposition}\label{prop-exinf-kan}
  $\exinf X$ is a Kan complex.
\end{proposition}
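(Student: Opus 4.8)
Since this is classical, one could simply cite III.4 of \cite{goerssjardine}; here is the shape of that argument, organized around the machinery of this paper. Two preliminary observations. First, $\ex$ preserves Kan fibrations: by the adjunction $\sd \dashv \ex$, a map $\ex p$ lifts against $\horn^n_k \hookrightarrow \triangle^n$ exactly when $p$ lifts against $\sd(\horn^n_k \hookrightarrow \triangle^n)$, and the latter is a strong anodyne extension by \ref{prop-subd-sae} (or directly by \ref{lem-horn-bdy}), in particular anodyne; so $\ex$, and hence $\exinf$, sends Kan complexes to Kan complexes, and the content of the proposition is therefore entirely in the case where $X$ is not already fibrant. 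Second, $\horn^n_k$ is a finite simplicial set, so any horn $\horn^n_k \to \exinf X = \mathrm{colim}_m \ex^m X$ factors through some stage $\ex^m X$.

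In view of the second observation it suffices to prove the following \emph{key step}: for every simplicial set $Y$ and every horn $\beta \colon \horn^n_k \to \ex Y$, the composite $\horn^n_k \xrightarrow{\beta} \ex Y \xrightarrow{j} \ex^2 Y$ extends along $\horn^n_k \hookrightarrow \triangle^n$. Granting this and applying it with $Y = \ex^m X$ and $\beta$ the composite $\horn^n_k \to \ex^m X \xrightarrow{j} \ex^{m+1} X$, the resulting map $\triangle^n \to \ex^{m+2} X \to \exinf X$ fills the original horn, because the structure maps $j$ are compatible with the colimit cocone; hence $\exinf X$ is a Kan complex.

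To attack the key step, transpose a hypothetical filler $\triangle^n \to \ex^2 Y$ across $\sd \dashv \ex$ to get a map $\sd\triangle^n \to \ex Y$. Using the naturality of the last-vertex maps $\bar\jmath$ — which is precisely what makes $j$ natural — the boundary condition becomes: this map must restrict on $\sd\horn^n_k$ to the composite $\sd\horn^n_k \xrightarrow{\bar\jmath} \horn^n_k \xrightarrow{\beta} \ex Y$. So the key step is equivalent to a combinatorial extension problem along the subdivided horn inclusion $\sd\horn^n_k \hookrightarrow \sd\triangle^n$.

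This extension problem is where I expect the real work to lie, and it cannot be dispatched by a retraction: there is no simplicial retraction $\sd\triangle^n \to \sd\horn^n_k$, as one already sees for $\sd\triangle^2$, where the order-preservation constraints on the image of the barycenter rule one out. One must instead exploit the second application of $\ex$ — equivalently, the extra room in the codomain $\ex Y$ as opposed to $\horn^n_k$ — and build the map $\sd\triangle^n \to \ex Y$ by induction over the non-degenerate simplices of $\sd\triangle^n$ lying outside $\sd\horn^n_k$. Here the cone structure of $\sd\triangle^n$ over $\sd\partial\triangle^n$ (with apex the barycenter $\{0,\dots,n\}$), combined with the binary pasting diagrams that comprise the horn $\beta$, provides the required fillers cell by cell. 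Checking that this induction closes up with the correct compatibilities is the one genuinely technical part; the rest of the argument is formal.
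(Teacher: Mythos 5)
The paper itself disposes of this proposition by citation (III.4.8 of \cite{goerssjardine}), and your opening sentence offers the same citation, which is acceptable. The formal reductions in your sketch are also correct and standard: finiteness of $\horn^n_k$ factors any horn in $\exinf X$ through a finite stage; transposing across $\sd \dashv \ex$ and using naturality of the last-vertex maps converts the filling problem into the extension problem for $\beta \circ \bar\jmath \colon \sd \horn^n_k \to \ex Y$ along $\sd\horn^n_k \hookrightarrow \sd\triangle^n$; and your observation that no retraction $\sd\triangle^n \to \sd\horn^n_k$ exists (for $n \geq 2$) correctly explains why the extra application of $\ex$ is essential.

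Read as a proof rather than a citation, however, the proposal has a genuine gap exactly at the point you defer: the extension $\sd\triangle^n \to \ex Y$ is the \emph{entire} mathematical content of the proposition and is never constructed. Moreover, the phrase ``provides the required fillers cell by cell'' is dangerous: $\ex Y$ has no horn- or boundary-filling properties for an arbitrary $Y$ --- that is precisely what is being proved, one level up --- so an induction that appeals to ``fillers'' in $\ex Y$ is circular. To make the induction close, each new non-degenerate $p$-cell of $\sd\triangle^n$ must be assigned an explicit element of $(\ex Y)_p = \sset(\sd\triangle^p, Y)$ manufactured from the only available data, the transposed horn $\sd\horn^n_k \to Y$; unwinding this, everything reduces to a single universal combinatorial lemma, namely that the iterated last-vertex map $\sd^2\horn^n_k \to \horn^n_k$ extends over $\sd^2\horn^n_k \hookrightarrow \sd^2\triangle^n$ to a map $\sd^2\triangle^n \to \horn^n_k$ (equivalently, a map $\sd\triangle^n \to \ex \horn^n_k$ extending $j \circ \bar\jmath$). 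That explicit poset-level construction is Lemma III.4.7 of \cite{goerssjardine}, and it is the one thing your sketch does not supply. Note also that your ``key step'' quantifies over arbitrary horns $\beta$ in $\ex Y$, whereas your application only needs $\beta$ of the form $j \circ \alpha$ with $\alpha \colon \horn^n_k \to \ex^m X$; stating it in that special case is what lets the single universal lemma above do all the work. So either cite the result, as the paper does, or prove that lemma; the surrounding formal argument is fine.
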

\begin{proof}
  See, for example, III.4.8 in \cite{GOERSSJARDINE1999}.
\end{proof}

Our goal now is the following:
\begin{theorem}\label{thm-ex-sae}
  The map $j_X : X \to \ex X$ is a strong anodyne extension.
\end{theorem}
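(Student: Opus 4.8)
The plan is to equip $j_X \colon X \hookrightarrow \ex X$ with a P-structure in the sense of \ref{def-structure}; by \ref{prop-structure-to-presentation} this automatically upgrades to an anodyne presentation, so there is nothing further to do once the P-structure is in hand (and \ref{cor-length-omega} then records that it has length $\omega$). The organising principle is the complexity filtration $X_n = J_n^0 \subseteq J_n^1 \subseteq \dots \subseteq J_n^n = (\ex X)_n$ of \ref{notn-filtration-ex-x}: the degree-$n$ part of the image of $j_X$ is exactly $J_n^0$, so every non-degenerate $\sigma \in (\ex X)_n$ outside the image of $j_X$ lies in $J_n^m \setminus J_n^{m-1}$ for a unique $m$ with $1 \le m \le n$, which I will call its \emph{complexity}. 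The aim is to build $\ex X$ out of $X$ by adjoining simplices in order of increasing complexity, using the maps $r_n^k$ of \ref{def-rk} to realise each such adjunction as a horn-filling.

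The first observation is that $r_n^k$ is a split epimorphism of simplicial sets, with section $\sd(d^{k+1})$: one checks $r_n^k \circ \sd(d^{k+1}) = \id_{\sd \triangle^n}$ directly on singletons, both maps preserving binary joins. Consequently $(- \circ r_n^k) \colon (\ex X)_n \to (\ex X)_{n+1}$ is a monomorphism split by the face operator $d_{k+1} = (- \circ \sd(d^{k+1}))$, and $e_n^k := \sd(d^{k+1}) \circ r_n^k$ is an idempotent endomorphism of $\sd \triangle^{n+1}$. I then declare $\sigma \in \nond{\ex X} \setminus \nond X$ to be a \emph{type II} simplex precisely when it is not fixed by any of the idempotents $(- \circ e_{n-1}^j)$, equivalently when it does not arise as $\rho \circ r_{n-1}^j$ for any lower-dimensional $\rho$ and admissible $j$; the remaining non-degenerate simplices outside $X$ are \emph{type I}. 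For a type II simplex $\sigma$ of complexity $m$ I set $\parent(\sigma) = \sigma \circ r_n^{m-1}$, which is one dimension higher, and the identity above gives $d_m \parent(\sigma) = \sigma$, so $\sigma$ is a face of $\parent(\sigma)$; in the resulting presentation this face is filled by the horn $\horn^{n+1}_m \hookrightarrow \triangle^{n+1}$.

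Granting this setup, most of \ref{def-structure} is bookkeeping: $\dim \parent(\sigma) = \dim \sigma + 1$ is immediate; that $\sigma$ occurs as a face of $\parent(\sigma)$ in only one way, that $\parent(\sigma)$ is again non-degenerate and outside $X$, and that $\parent$ is a bijection onto the type I simplices all come down to a handful of combinatorial identities relating the maps $j_n^k$, $r_n^k$ and the subdivided (co)faces and (co)degeneracies — in particular one must see that the index $m-1$ and the reduction $d_m \tau$ of a type I simplex $\tau$ are uniquely recoverable from $\tau$ itself. For well-foundedness of the ancestral relation I would exhibit a rank function $R \colon \nond{\ex X} \to \ord$ that is constant on each parent–child pair and strictly decreasing along every other proper-face inclusion, assembled from the complexity together with a secondary measure on each $(\ex X)_n$ that strictly drops on passing from $\parent(\sigma)$ to any of its faces other than $\sigma$.

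The main obstacle is exactly this last package of combinatorial lemmas about the $j_n^k$ and $r_n^k$: these maps are engineered so that splitting their idempotents interleaves the complexity filtration with horn-fillings, and the delicate point is that the interleaving be consistent — that the faces of $\parent(\sigma)$ other than $\sigma$ genuinely sit below $\sigma$ in the ancestral order, and that no simplex is claimed as a parent in two different ways. Once those identities are pinned down, the P-structure axioms follow and \ref{prop-structure-to-presentation} completes the proof.
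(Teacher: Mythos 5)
Your scaffolding is exactly the paper's: filter $(\ex X)_n$ by $X_n = J_n^0 \subseteq \cdots \subseteq J_n^n$, declare a non-degenerate simplex outside $X$ to be type II when it is not a precomposition with an $r$-map, define its parent by precomposing it with an $r$-map at the index given by its complexity, and feed the result to \ref{prop-structure-to-presentation} with the lexicographic rank $(n,\text{complexity})$. But your parent assignment is off by one, and this is not cosmetic. For $\sigma \in J_n^m \setminus J_n^{m-1}$ the parent must be $\sigma \circ r_n^{m}$, not $\sigma \circ r_n^{m-1}$. Already at complexity $m=1$ your formula gives $\parent(\sigma) = \sigma \circ r_n^0 = \sigma \circ (\sd s^0) = s_0\sigma$, which is \emph{degenerate} (recall $r_n^0 = \sd s^0$) and so cannot be a type I simplex. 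For $m \ge 2$ the deeper problem is that the mechanism forcing the other faces of the horn to be present first is the identity \eqref{eq:r-lower-faces}, which only bites because $\sigma = \sigma \circ j_n^{m}$: it shows the faces $d_i(\sigma\circ r_n^{m})$ with $i \le m$ land in $J_n^{m-1}$, i.e.\ have strictly smaller complexity. With index $m-1$ you would need $\sigma = \sigma \circ j_n^{m-1}$, which is exactly false, so the lower faces of $\sigma \circ r_n^{m-1}$ need not have smaller rank and the ancestral relation is not controlled. Relatedly, your type II criterion drops the condition that the lower-dimensional $\rho$ with $\sigma = \rho \circ r_{n-1}^j$ have complexity exactly $j$; without that matching, $\parent$ is not a bijection onto the type I class (mismatched presentations are degenerate by \eqref{eq:j-r-deg}, or are not of the form $\parent(\text{type II})$ at all).

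The second gap is that everything you defer as ``bookkeeping'' or ``a handful of combinatorial identities'' is the actual content of the theorem. The paper needs the ten identities of \ref{lem-jr-eqns}, plus three dedicated lemmas, to show that $\parent$ is injective, that its image is precisely the type I simplices (the complements being degenerate), and that $\sigma$ is a face of $\parent\sigma$ in a unique way; and the well-foundedness proof is a non-trivial case analysis of the faces $(\parent\sigma)\circ(\sd d^i)$ for $i \ge k+2$ (each is degenerate, type I, or of strictly smaller rank), not a routine check. You correctly guess the rank function, but as written the proposal supplies neither a correct P-structure nor any of the verification that it is one.
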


In fact, more is true than Proposition \ref{prop-exinf-kan}: any horn in $X$ has a filler in $\ex^2 X$.
Hence we can view \ref{prop-exinf-kan} and \ref{thm-ex-sae} as showing how to implement the horn-filling and subdivision-pasting operations in terms of one another.
Let us consider an example of how to make this more precise.
Say that $X$ admits \emph{subdivision-pasting} if $j_X : X \to \ex X$ admits a retraction $\ex X \to X$.
Now if $X$ admits subdivision-pasting, then (the strong form of) Proposition \ref{prop-exinf-kan} implies that $X$ is a Kan complex, i.e.\ $X$ admits the horn-filling operations.
Conversely, if $X$ is a Kan complex, then we can use \ref{thm-ex-sae} to get a retraction of $j_X : X \to \ex X$.

Observe that filling a horn of shape $\horn^{n+1}_k$ is like an $n$-ary operation on $n$-simplices that produces one $n$-simplex (and one $(n+1)$-simplex), whereas pasting together a diagram of shape $\sd \triangle^n$ is like an $(n+1)!$-ary operation on $n$-simplices that produces one $n$-simplex.
Thus we should expect that modelling subdivision-pasting via horn-filling will take an increasingly large number of steps as the dimension of the pasting diagram increases (whereas the converse only ever requires two steps, i.e.\ to fill any horn in $X$ we only need to pass to $\ex^2 X$).
Let us introduce a notion of `complexity' on the elements of $(\ex X)_n$ with respect to which we can reduce each subdivision pasting problem to a finite set of less complex ones after one horn-filling operation.

\begin{definition}\label{def-jk}
  Let $n, k \in \NN$ with $0 \leq k \leq n$.
  Define
  \begin{displaymath}
    j_n^k : \sd \triangle^n \to \sd \triangle^n
  \end{displaymath}
  to be the nerve of the unique map $\nond{(\triangle^n)} \to \nond{(\triangle^n)}$ that preserves binary joins and satisfies:
  \begin{displaymath}
    j_n^k(\{i\}) =
    \begin{cases}
      \{i\} & \text{ if $i \leq k$} \\
      \{0,1,\ldots,i\} & \text{ if $i > k$}.
    \end{cases}
  \end{displaymath}
\end{definition}

For example, $j_n^n$ is the identity and $j_n^0$ is the last vertex map $\bar\jmath_n : \sd \triangle^n \to \triangle^n$ composed with the appropriate inclusion $\triangle^n \to \sd \triangle^n$.

\begin{figure}[h]
  \centering
  \begin{tikzpicture}
    \begin{scope}[xshift=0cm]
      \coordinate (20) at (-1.25,0);
      \coordinate (21) at (1.25,0);
      \coordinate (22) at (0,2.165);
      \coordinate (201) at (barycentric cs:20=1,21=1);
      \coordinate (202) at (barycentric cs:20=1,22=1);
      \coordinate (212) at (barycentric cs:21=1,22=1);
      \coordinate (2012) at (barycentric cs:20=1,21=1,22=1);
      \draw
      (20) node[below left] {\scriptsize $0$}
      (21) node[below right] {\scriptsize $1$}
      (22) node[above] {\scriptsize $2$}
      ;
      \begin{scope}[black!25!white]
        \draw[fill]
        (21) circle (0.1em)
        (22) circle (0.1em)
        (202) circle (0.1em)
        (212) circle (0.1em)
        ;
        \path[->,shorten >=0.15em]
        (20) edge (202)
        (21) edge (201)
        (21) edge (212)
        (21) edge (2012)
        (22) edge (202)
        (22) edge (212)
        (22) edge (2012)
        (202) edge (2012)
        (212) edge (2012)
        ;
      \end{scope}
      \begin{scope}[black!60!white]
        \coordinate (21) at (0.3,0);
        \coordinate (22) at (0,1.15);
        \coordinate (202) at (barycentric cs:20=1,22=5);
        \coordinate (212) at (barycentric cs:21=1,22=2.5,212=0.7);
        \draw[fill]
        (21) circle (0.1em)
        (22) circle (0.1em)
        (202) circle (0.1em)
        (212) circle (0.1em)
        ;
        \path[->,shorten >=0.15em]
        (20) edge (202)
        (21) edge (201)
        (21) edge (212)
        (21) edge (2012)
        (22) edge (202)
        (22) edge (212)
        (22) edge (2012)
        (202) edge (2012)
        (212) edge (2012)
        ;
      \end{scope}
      \draw[fill]
      (20) circle (0.1em)
      (201) circle (0.1em)
      (2012) circle (0.1em)
      ;
      \path[->,shorten >=0.15em]
      (20) edge (201)
      (201) edge (2012)
      (20) edge (2012)
      ;
    \end{scope}
    \begin{scope}[xshift=4cm]
      \coordinate (20) at (-1.25,0);
      \coordinate (21) at (1.25,0);
      \coordinate (22) at (0,2.165);
      \coordinate (201) at (barycentric cs:20=1,21=1);
      \coordinate (202) at (barycentric cs:20=1,22=1);
      \coordinate (212) at (barycentric cs:21=1,22=1);
      \coordinate (2012) at (barycentric cs:20=1,21=1,22=1);
      \draw
      (20) node[below left] {\scriptsize $0$}
      (21) node[below right] {\scriptsize $1$}
      (22) node[above] {\scriptsize $2$}
      ;
      \begin{scope}[black!25!white]
        \draw[fill]
        (22) circle (0.1em)
        (202) circle (0.1em)
        (212) circle (0.1em)
        ;
        \path[->,shorten >=0.15em]
        (20) edge (202)
        (21) edge (212)
        (22) edge (202)
        (22) edge (212)
        (22) edge (2012)
        (202) edge (2012)
        (212) edge (2012)
        ;
      \end{scope}
      \begin{scope}[black!60!white]
        \coordinate (22) at (0,1.15);
        \coordinate (202) at (barycentric cs:20=1,22=5);
        \coordinate (212) at (barycentric cs:21=1,22=5);
        \draw[fill]
        (22) circle (0.1em)
        (202) circle (0.1em)
        (212) circle (0.1em)
        ;
        \path[->,shorten >=0.15em]
        (20) edge (202)
        (21) edge (212)
        (21) edge (2012)
        (22) edge (202)
        (22) edge (212)
        (22) edge (2012)
        (202) edge (2012)
        (212) edge (2012)
        ;
      \end{scope}
      \draw[fill]
      (20) circle (0.1em)
      (21) circle (0.1em)
      (201) circle (0.1em)
      (2012) circle (0.1em)
      ;
      \path[->,shorten >=0.15em]
      (20) edge (201)
      (21) edge (201)
      (201) edge (2012)
      (20) edge (2012)
      (21) edge (2012)
      ;
    \end{scope}
    \begin{scope}[xshift=8cm]
      \coordinate (20) at (-1.25,0);
      \coordinate (21) at (1.25,0);
      \coordinate (22) at (0,2.165);
      \coordinate (201) at (barycentric cs:20=1,21=1);
      \coordinate (202) at (barycentric cs:20=1,22=1);
      \coordinate (212) at (barycentric cs:21=1,22=1);
      \coordinate (2012) at (barycentric cs:20=1,21=1,22=1);
      \draw[fill]
      (20) circle (0.1em) node[below left] {\scriptsize $0$}
      (21) circle (0.1em) node[below right] {\scriptsize $1$}
      (22) circle (0.1em) node[above] {\scriptsize $2$}
      (201) circle (0.1em)
      (202) circle (0.1em)
      (212) circle (0.1em)
      (2012) circle (0.1em)
      ;
      \path[->,shorten >=0.15em]
      (20) edge (201)
      (20) edge (202)
      (20) edge (2012)
      (21) edge (201)
      (21) edge (212)
      (21) edge (2012)
      (22) edge (202)
      (22) edge (212)
      (22) edge (2012)
      (201) edge (2012)
      (202) edge (2012)
      (212) edge (2012)
      ;
    \end{scope}
  \end{tikzpicture}
  \caption{$j_2^k$ for $k \in \{0,1,2\}$.}
  \label{fig:defn-j-n-k}
\end{figure}

It is easily seen that $j_n^k$ is idempotent, (i.e.\ $j_n^k \circ j_n^k = j_n^k$), so that $(- \circ j_n^k) : (\ex X)_n \to (\ex X)_n$ is idempotent for any $X$.
Taking the image --- or, equivalently, the set of fixed points --- of each function $(-\circ j_n^k)$ gives us a `filtration' of $(\ex X)_n$.

\begin{notation}\label{notn-filtration-ex-x}
  Fix some simplicial set $X$.
  Let
  \begin{displaymath}
    J_n^k = \{ \sigma \in (\ex X)_n \mid \sigma = \sigma \circ j_n^k\}.
  \end{displaymath}
\end{notation}

It is easily checked that $j_n^k \circ j_n^l = j_n^k$ for $k \leq l$, so that we have a chain of subsets
\begin{displaymath}
  j_X(X_n) = J_n^0 \subseteq J_n^1 \subseteq \ldots \subseteq J_n^{n-1} \subseteq J_n^n = (\ex X)_n.
\end{displaymath}

Roughly speaking, each $J_n^k$ is the collection of those pasting diagrams of shape $\sd \triangle^n$ in $X$ which consist of at most $(k+1)!$ nondegenerate $n$-simplices.
To be slightly more precise, each pasting diagram in $J_n^k$ has arbitrary $n$-simplices in the $(k+1)!$ operand positions corresponding to those permutations of $(0,1,\ldots,n)$ which fix each of the tail elements $k+1,\ldots,n$, and only degenerate simplices in the remaining operand positions.
(To be completely precise, we would have to say in which directions these latter simplices are degenerate.)

Now we introduce a generalization of the codegeneracy maps whose role is to show us how to reduce the construction of one element of $J_n^{k+1}$ to the construction of $(k+2)$ elements of $J_n^k$ followed by a horn-filling.

\begin{definition}\label{def-rk}
  Let $n,k \in \NN$ with $0 \leq k \leq n$ and define
  \begin{displaymath}
    r_n^k : \sd \triangle^{n+1} \to \sd \triangle^n
  \end{displaymath}
  to be the nerve of the unique map $\nond{(\triangle^{n+1})} \to \nond{(\triangle^n)}$ that preserves binary joins and satisfies:
  \begin{displaymath}
    r_n^k(\{i\}) =
    \begin{cases}
      \{i\} & \text{ if $i \leq k$} \\
      \{0,1,\ldots,k\} & \text{ if $i = k+1$} \\
      \{i-1\} & \text{ if $i > k+1$}.
    \end{cases}
  \end{displaymath}
  These maps are almost the subdivided codegeneracy maps $\sd (s^k)$, except for their behaviour at $k+1$.
  Indeed, $r_n^0 = \sd s_n^0$.
\end{definition}

\begin{figure}[h]
  \centering
  \begin{tikzpicture}
    \begin{scope}[xshift=0cm]
      \coorda{t0}
      \coordb{t1}
      \coordc{t2}
      \coordd{t3}
      \coordinate (t01) at (barycentric cs:t0=1,t1=1);
      \draw[fill=black]
      (t0) circle (0.1em) node[below left] {\scriptsize $0$}
      (t1) circle (0.1em) node[below] {\scriptsize $1$}
      (t2) circle (0.1em) node[below right] {\scriptsize $3$}
      (t3) circle (0.1em) node[above] {\scriptsize $2$}
      ;
      \draw (t0) -- (t1) -- (t2) -- (t3) -- (t0) -- (t2) (t1) -- (t3);
      \draw[dashed,->] (t3) -- (t01);
      \draw[dotted] (t01) edge (t2);
      \coordinate (p0) at ($ (t0) + (0,-2) $);
      \coordinate (p1) at ($ (t1) + (0,-2) $);
      \coordinate (p2) at ($ (t2) + (0,-2) $);
      \draw[fill=black]
      (p0) circle (0.1em) node[below left] {\scriptsize $0$}
      (p1) circle (0.1em) node[below] {\scriptsize $1$}
      (p2) circle (0.1em) node[below right] {\scriptsize $2$}
      ;
      \coordinate (p01) at (barycentric cs:p0=1,p1=1);
      \draw (p0) -- (p1) -- (p2) edge (p0) edge (p01);
    \end{scope}
    \begin{scope}[xshift=5cm]
      \coorda{t0}
      \coordb{t1}
      \coordc{t2}
      \coordd{t3}
      \coordinate (t012) at (barycentric cs:t0=1,t1=1,t2=1);
      \draw[fill=black]
      (t0) circle (0.1em) node[below left] {\scriptsize $0$}
      (t1) circle (0.1em) node[below] {\scriptsize $1$}
      (t2) circle (0.1em) node[below right] {\scriptsize $2$}
      (t3) circle (0.1em) node[above] {\scriptsize $3$}
      ;
      \draw (t0) -- (t1) -- (t2) -- (t3) -- (t0) -- (t2) (t1) -- (t3);
      \draw[dashed,->] (t3) -- (t012);
      \path[dotted] (t012) edge (t0) edge (t1) edge (t2);
      \coordinate (p0) at ($ (t0) + (0,-2) $);
      \coordinate (p1) at ($ (t1) + (0,-2) $);
      \coordinate (p2) at ($ (t2) + (0,-2) $);
      \draw[fill=black]
      (p0) circle (0.1em) node[below left] {\scriptsize $0$}
      (p1) circle (0.1em) node[below] {\scriptsize $1$}
      (p2) circle (0.1em) node[below right] {\scriptsize $2$}
      ;
      \coordinate (p012) at (barycentric cs:p0=1,p1=1,p2=1);
      \draw (p0) -- (p1) -- (p2) -- (p0) -- (p012) edge (p1) edge (p2);
    \end{scope}
  \end{tikzpicture}
  \caption{$r_2^1$ and $r_2^2$.}
  \label{fig:defn-r-2-k}
\end{figure}

\begin{figure}[h]
  \centering
  \begin{tikzpicture}
    \begin{scope}[xshift=0cm]
      \coorda{t0}
      \coordb{t1}
      \coordc{t2}
      \coordd{t3}
      \coordinate (t01) at (barycentric cs:t0=1,t1=1);
      \fill[gray,fill opacity=0.15] (t2) -- (t01) -- (t3) -- cycle;
      \draw
      (t0) -- (t1) -- (t2) -- (t3) -- (t0) -- (t2)
      (t1) -- (t3)
      (t2) -- (t01) -- (t3)
      ;
      \draw[fill=black]
      (t0) circle (0.1em)
      (t1) circle (0.1em)
      (t2) circle (0.1em)
      (t3) circle (0.1em)
      (t01) circle (0.1em)
      ;
    \end{scope}
    \begin{scope}[xshift=5cm]
      \coorda{t0}
      \coordb{t1}
      \coordc{t2}
      \coordd{t3}
      \coordinate (t012) at (barycentric cs:t0=1,t1=1,t2=1);
      \fill[gray,fill opacity=0.15]
      (t3) -- (t012) -- (t2) -- cycle
      (t3) -- (t012) -- (t1) -- cycle
      (t3) -- (t012) -- (t0) -- cycle
      ;
      \draw
      (t0) -- (t1) -- (t2) -- (t3) -- (t0) -- (t2)
      (t1) -- (t3)
      (t3) -- (t012) edge (t1) edge (t0) edge (t2)
      ;
      \draw[fill=black]
      (t0) circle (0.1em)
      (t1) circle (0.1em)
      (t2) circle (0.1em)
      (t3) circle (0.1em)
      (t012) circle (0.1em)
      ;
    \end{scope}
    \begin{scope}[xshift=10cm]
      \coorda{t0}
      \coordb{t1}
      \coordc{t2}
      \coordd{t3}
      \coordinate (t0123) at (barycentric cs:t0=1,t1=1,t2=1,t3=1);
      \fill[gray,fill opacity=0.15]
      (t3) -- (t0123) -- (t2) -- cycle
      (t3) -- (t0123) -- (t1) -- cycle
      (t3) -- (t0123) -- (t0) -- cycle
      (t2) -- (t0123) -- (t0) -- cycle
      (t2) -- (t0123) -- (t1) -- cycle
      (t1) -- (t0123) -- (t0) -- cycle
      ;
      \draw (t0) -- (t1) -- (t2) -- (t3) -- (t0) -- (t2)
      (t1) -- (t3)
      (t3) -- (t0123) edge (t1) edge (t0) edge (t2)
      ;
      \draw[fill=black]
      (t0) circle (0.1em)
      (t1) circle (0.1em)
      (t2) circle (0.1em)
      (t3) circle (0.1em)
      (t0123) circle (0.1em)
      ;
    \end{scope}
  \end{tikzpicture}
  \caption{The images of the 0-, 1- and  2-dimensional faces of $\triangle^4$ in $\triangle^3$ under the maps $r_3^1$, $r_3^2$ and $r_3^3$. Shading is used to highlight the images of 2-dimensional faces not coincident with faces of $\triangle^3$.}
  \label{fig:defn-r-3-k}
\end{figure}

We can give an explicit description of a pairing on $j_X : X \to \ex X$.
Let us say that a nondegenerate $(n+1)$-simplex $\rho \notin J_{n+1}^0$ is of type I if and only if it is of the form $\tau \circ r_n^h$ with $\tau \in J_n^h \backslash J_n^{h-1}$, for some $1 \leq h \leq n$.
Hence, for a type II $n$-simplex $\sigma$, we let $\parent \sigma = \sigma \circ r_n^k$ where $k$ is determined by $\sigma \in J_n^k \backslash J_n^{k-1}$.
It remains to check that this is indeed a well-defined proper pairing and that it is regular.
Even though there are a few details to check, the specification of the pairing is very simple and, as we shall see in \ref{prop-reflects}, useful.
Let us record here the main equations we shall use in the course of the proof. Besides the proof of these, we shall not need to refer again to the details of Definitions \ref{def-jk} and \ref{def-rk}.

\begin{lemma}\label{lem-jr-eqns}
  The following equations hold when the given inequalities are satisfied.
    \begin{align}
    r_n^k \circ (\sd d^{k+1}) & = \mathrm{id}_{\sd \triangle^n}  & & 0 \leq k \leq n \label{eq:r-has-face} \\
    j_n^k \circ r_n^k \circ (\sd d^i) \circ j_n^{k-1} & = j_n^k \circ r_n^k \circ (\sd d^i) & & \mbox{$0 \leq i \leq k \leq n$ and $1 \leq k$} \label{eq:r-lower-faces} \\
    r_n^k \circ (\sd d^i) & = (\sd d^{i-1}) \circ r_{n-1}^k & & \mbox{$0 \leq k$ and $k+2 \leq i \leq n+1$} \label{eq:r-upper-faces-r} \\
    r_n^k \circ j_{n+1}^h & = j^h_n \circ r_n^k & & \mbox{$0 \leq h \leq k \leq n$} \label{eq:r-j-j-r} \\
    j_n^k \circ (\sd d^i) \circ j_{n-1}^k & = j_n^k \circ (\sd d^i) & & \mbox{$0 \leq i \leq n$ and $0 \leq k \leq n-1$} \label{eq:j-face-j} \\
    j_{n-1}^h \circ r_{n-1}^k & = j_{n-1}^h \circ (\sd s^k) & & 0 \leq h < k \leq n-1 \label{eq:j-r-deg} \\
    j_{n-1}^k \circ r_{n-1}^k \circ r_n^k & = j_{n-1}^k \circ r_{n-1}^k \circ (\sd s^{k+1}) & & 0 \leq k \leq n-1 \label{eq:j-r-r-deg-2} \\
    (\sd s^h) \circ j_{n+1}^{k+1} \circ r_{n+1}^{k+1} & = j_n^k \circ r_n^k \circ (\sd s^h) & & 0 \leq h \leq k \leq n \label{eq:s-j-r-j-r-s-1} \\
    (\sd s^h) \circ j_{n+1}^k \circ r_{n+1}^k & = j_n^k \circ r_n^k \circ (\sd s^{h+1}) & & 0 \leq k \leq h \leq n \label{eq:s-j-r-j-r-s-2}
  \end{align}
\end{lemma}
\begin{proof}
  These are easy calculations at the level of posets prior to taking the nerve and using the fact that all the maps involved preserve joins of non-empty sets.
  They have been verified in the Coq proof assistant~\cite{my-github-repo}.
\end{proof}

We begin by showing that the operation $\parent$ is injective.
\begin{lemma}\label{lem-i-in-a-unique-way}
  There is at most one way to write any $\sigma \in (\ex X)_n$ as $\sigma = \tau \circ r_n^k$ where $k \geq 1$ and $\tau \in J_n^k\backslash J_n^{k-1}$.
\end{lemma}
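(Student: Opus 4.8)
The plan is to recover the pair $(k,\tau)$ from $\sigma$ in two stages. First I would show that $\tau$ is determined by $\sigma$ together with $k$: by equation \eqref{eq:r-has-face}, $\sd d^{k+1}$ is a section of $r_n^k$, so precomposing $\sigma = \tau\circ r_n^k$ with $\sd d^{k+1}$ gives $\tau = \sigma\circ(\sd d^{k+1})$. Thus $\tau$ is nothing but the face $d_{k+1}\sigma$, and all that remains is to show that two different values of $k$ cannot both occur.

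To pin down $k$ I would show that any representation $\sigma = \tau\circ r_n^k$ with $k\geq 1$ and $\tau\in J_n^k\setminus J_n^{k-1}$ forces $\sigma$ itself to lie in $J_{n+1}^k\setminus J_{n+1}^{k-1}$; since the $J_{n+1}^h$ form an increasing chain, $k$ is then identified as the unique index of the ``layer'' containing $\sigma$, and $\tau = \sigma\circ(\sd d^{k+1})$ follows. The membership $\sigma\in J_{n+1}^k$ is immediate from \eqref{eq:r-j-j-r} with $h=k$, since $\sigma\circ j_{n+1}^k = \tau\circ r_n^k\circ j_{n+1}^k = \tau\circ j_n^k\circ r_n^k = \tau\circ r_n^k = \sigma$, the third equality using $\tau\in J_n^k$.

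The non-membership $\sigma\notin J_{n+1}^{k-1}$ I would prove by contradiction. If $\sigma = \sigma\circ j_{n+1}^{k-1}$, then substituting $\sigma = \tau\circ r_n^k$ and applying \eqref{eq:r-j-j-r} with $h=k-1$ (permissible as $k\geq 1$) gives $\sigma = \tau\circ j_n^{k-1}\circ r_n^k$; precomposing this with $\sd d^{k+1}$ and using \eqref{eq:r-has-face} once more gives $\tau = \sigma\circ(\sd d^{k+1}) = \tau\circ j_n^{k-1}$, so $\tau\in J_n^{k-1}$, contradicting the hypothesis. Combining the two stages, any two representations of $\sigma$ in the stated form agree in their index $k$ and hence in $\tau$.

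Every algebraic identity used is among those listed in \ref{lem-jr-eqns}, so the proof is essentially bookkeeping with maps already under control; the one slightly delicate point is the non-membership step, where one must combine \eqref{eq:r-j-j-r} (to commute $j_{n+1}^{k-1}$ past $r_n^k$) with \eqref{eq:r-has-face} (to cancel $r_n^k$ against its section). I expect this to be the main obstacle, and it is a mild one.
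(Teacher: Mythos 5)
Your proof is correct, and its skeleton matches the paper's: both first recover $\tau$ as $\sigma\circ(\sd d^{k+1})$ via \eqref{eq:r-has-face} and establish $\sigma\in J_{n+1}^{k}$ via \eqref{eq:r-j-j-r}, reducing everything to the uniqueness of $k$. The one place you diverge is in pinning down $k$: the paper compares two putative representations head-on, using \eqref{eq:j-face-j} to show $\tau_2\in J_n^{k_1}$ and hence $k_2\leq k_1$ (then symmetrizes), whereas you show directly that $\sigma\notin J_{n+1}^{k-1}$ by applying \eqref{eq:r-j-j-r} with $h=k-1$ and cancelling $r_n^k$ against its section. Your variant is marginally stronger and arguably cleaner, since it identifies $k$ intrinsically as the exact filtration level of $\sigma$ in $(\ex X)_{n+1}$ rather than only as the common value of two competing indices, and it avoids \eqref{eq:j-face-j} altogether; the paper's version gets by with a slightly shorter computation. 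Either way the lemma is established.
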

\begin{proof}
  Suppose $\sigma = \tau_i \circ r_n^{k_i}$ with $\tau_i \in J_n^{k_i}\backslash J_n^{k_i-1}$ for $i=1,2$.
  Then by \eqref{eq:r-j-j-r}, we have $\sigma \in J_{n+1}^{k_i}$ for $i=1,2$.
  By \eqref{eq:r-has-face} we have $\tau_i = \sigma \circ (\sd d^{k_i+1})$, so it is enough to show that $k_1 = k_2$.
  Now
  \begin{align*}
    \tau_2 \circ j_n^{k_1} & = \sigma \circ (\sd d^{k_2+1}) \circ j_n^{k_1} \\
                           & = \sigma \circ j_{n+1}^{k_1} \circ (\sd d^{k_2+1}) \circ j_n^{k_1} \\
                           & = \sigma \circ j_{n+1}^{k_1} \circ (\sd d^{k_2+1}) \\
                           & = \sigma \circ (\sd d^{k_2+1}) \\
                           & = \tau_2
  \end{align*}
  using \eqref{eq:j-face-j}.
  Hence $\tau_2 \in J_n^{k_1}$, hence $k_2 \leq k_1$.
  But by a similar argument $k_1 \leq k_2$.
\end{proof}

The next lemma shows that the image of the operation $\parent$ does indeed contain all type I simplices.
\begin{lemma}\label{lem-all-have-a-type}
  If $\rho \in (\ex X)_{n+1}\backslash J_{n+1}^0$ is nondegenerate and of the form $\sigma \circ r_n^k$ where $k \geq 1$ and $\sigma \in J_n^k \backslash J_n^{k-1}$ (i.e.\ $\rho$ is a type I $(n+1)$-simplex), then $\sigma$ is of type II.
\end{lemma}
\begin{proof}
  If $\sigma$ is degenerate, then it follows immediately from \eqref{eq:s-j-r-j-r-s-1} and \eqref{eq:s-j-r-j-r-s-2} that $\rho$ is as well.
  Suppose for contradiction that $\sigma$ is nondegenerate yet not of type II.
  Then either $\sigma \in J_n^0$ and hence, by \eqref{eq:r-j-j-r}, $\rho \in J_{n+1}^0$, or there is an expression $\sigma = \tau \circ r_{n-1}^h$ with $h \geq 1$ and $\tau \in J_{n-1}^h \backslash J_{n-1}^{h-1}$.
  Since $\tau = \sigma \circ (\sd d^{h+1})$ by \eqref{eq:r-has-face}, it follows by \eqref{eq:j-face-j} that $\tau \circ j_{n-1}^k = \tau$, hence $h \leq k$.
  Now applying \eqref{eq:r-j-j-r} we see that $\sigma \circ j_n^h = \sigma$ hence $k \leq h$, so indeed $k = h$.
  It now follows from \eqref{eq:j-r-r-deg-2} that $\rho = \tau \circ r_{n-1}^k \circ r_n^k$ is degenerate.
\end{proof}

Now we check that $\parent$ only takes values in type I simplices and is a proper pairing.
\begin{lemma}\label{lem-only-type-i-proper-pairing}
  Let $\sigma \in (\ex X)_n$ be type II.
  Then there is a unique $0 \leq i \leq n+1$ such that $d^i_{\ex X}(\parent\sigma) = \sigma$.
  In particular, $\parent\sigma$ is nondegenerate.
  In addition, $\parent\sigma \notin J_{n+1}^0$, hence $\parent\sigma$ is a type I simplex.
\end{lemma}
\begin{proof}
  Suppose that $\sigma \in J_n^k \backslash J_n^{k-1}$ and consider three cases for $i$.
  Firstly, if $i = k+1$, we have $\sigma = (\parent \sigma) \circ (\sd d^{k+1}) = d_{\ex X}^i(\parent \sigma)$ by \eqref{eq:r-has-face}.
  Secondly, if $i \leq k$, then by \eqref{eq:r-lower-faces}, $(\parent \sigma) \circ (\sd d^i) \in J_n^{k-1}$, so it cannot be equal to $\sigma$.
  Thirdly, if $i \geq k+2$, then by \eqref{eq:r-upper-faces-r}, $(\parent \sigma) \circ (\sd d^i) = \tau \circ r_{n-1}^k$ where $\tau = \sigma \circ (\sd d^{i-1})$.
  By \eqref{eq:j-face-j}, $\tau \in J_{n-1}^k$.
  If $\tau \notin J_{n-1}^{k-1}$, then $(\parent \sigma) \circ (\sd d^i)$ cannot be a type II simplex, so cannot be equal to $\sigma$.
  If $\tau \in J_{n-1}^{k-1}$, then by \eqref{eq:j-r-deg}, $(\parent \sigma) \circ (\sd d^i)$ is degenerate, so cannot be equal to $\sigma$.
  Finally, observe that for any type II $n$-simplex $\sigma$, the equation $(\parent \sigma) \circ j_{n+1}^0 = \parent \sigma$ would imply (by \eqref{eq:r-has-face} and \eqref{eq:j-face-j}) that $\sigma \circ j_n^0 = \sigma$, so indeed $\parent \sigma \notin J_{n+1}^0$.

\end{proof}
The proof above illustrates how the pairing $\parent$ works because we chose the map $r^k_n$ such that, for $\sigma \in J_n^k \backslash J_n^{k-1}$, if $i < k+1$ the faces $d^i_{\ex X}(\parent\sigma)$ are of lower `complexity' than $\sigma$ and if $i > k+1$ the faces are `degenerate', albeit possibly only in the more general sense of being $\tau \circ r_n^k = \parent \tau$ for some $\tau$.
This same feature is what allows us to prove regularity.

\begin{proof}[of Theorem \ref{thm-ex-sae}.]
  The foregoing lemmas show that we do indeed have a proper pairing $\parent$.
  All that remains is to check that the pairing is regular.
  Given a type II $n$-simplex $\sigma$, let $\phi(\sigma) = k$ where $\sigma \in J_n^k \backslash J_n^{k-1}$.
  Now suppose that $\xi \modanc n \sigma$.
  We will show that $\phi(\xi) < \phi(\sigma)$.
  Still writing $k = \phi(\sigma)$, we must have $\xi = d_{\ex X}^i(\parent \sigma)$ for some $i \neq k + 1$ since $\xi \neq \sigma$.
  Firstly, let us suppose that $i \geq k+2$.
  Then, by \eqref{eq:r-upper-faces-r} and as in the proof of Lemma \ref{lem-only-type-i-proper-pairing}, we have $\xi = \tau \circ r_{n-1}^k$ where $\tau = \sigma \circ (\sd d^{i-1})$.
  By \eqref{eq:j-face-j}, $\tau \in J_{n-1}^k$.
  If $\tau \in J_{n-1}^{k-1}$ then, by \eqref{eq:j-r-deg}, $\xi$ is degenerate: a contradiction as $\xi$ is assumed to be of type II.
  So $\tau \in J_{n-1}^k \backslash J_{n-1}^{k-1}$, hence $\xi = \tau \circ r_{n-1}^k$ is of type I, a contradiction.
  Secondly, let us suppose that $i \leq k$.
  Then, by \eqref{eq:r-lower-faces} and as in the proof of Lemma \ref{lem-only-type-i-proper-pairing}, we see that $\xi \in J_n^{k-1}$ and hence $\phi(\xi) < \phi(\sigma)$.
\end{proof}

\begin{corollary}
  The map $j : X \to \exinf X$ is a strong anodyne extension.
\end{corollary}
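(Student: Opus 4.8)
The plan is to present $j \colon X \to \exinf X$ as a transfinite composite of the maps $j_{\ex^n X} \colon \ex^n X \to \ex^{n+1} X$ and then to observe that a transfinite composite of strong anodyne extensions is again a strong anodyne extension, essentially because anodyne presentations can be concatenated.

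First I would record the structure of the colimit of the sequential diagram $X \to \ex X \to \ex^2 X \to \cdots$ defining $\exinf X$. By Theorem \ref{thm-ex-sae}, applied with $\ex^n X$ in place of $X$, each $j_{\ex^n X}$ is a strong anodyne extension, in particular a monomorphism; since colimits in $\sset$ are computed pointwise and a sequential colimit of injections of sets has injective coprojections whose colimit is the union, each canonical map $\ex^n X \to \exinf X$ is a monomorphism and $\exinf X$ is the increasing union $\bigcup_n \ex^n X$ of subcomplexes. Moreover, by Corollary \ref{cor-length-omega}, each $j_{\ex^n X}$ admits an anodyne presentation of length $\omega$, say $(B^\alpha_n)_{\alpha \leq \omega}$, with $B^0_n = \ex^n X$ and $B^\omega_n = \bigcup_{\alpha < \omega} B^\alpha_n = \ex^{n+1} X$; each $B^\alpha_n$ is a subcomplex of $\ex^{n+1} X$, hence of $\exinf X$.

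Now I would splice these presentations together. Set $\kappa = \omega^2$; for $\beta < \omega^2$, written uniquely as $\beta = \omega\cdot n + \alpha$ with $n \geq 0$ and $\alpha < \omega$, put $A^\beta = B^\alpha_n$, and put $A^{\omega^2} = \exinf X$. It then remains only to check the conditions of Definition \ref{def-sae}. Clearly $A^0 = B^0_0 = X$, and $A^{\omega^2} = \bigcup_{\beta < \omega^2} A^\beta$ since this union is $\bigcup_n \bigcup_{\alpha < \omega} B^\alpha_n = \bigcup_n \ex^{n+1} X = \exinf X$. For a non-zero limit ordinal $\lambda < \omega^2$ lying strictly inside a block, say $\omega n < \lambda < \omega(n+1)$, the continuity $\bigcup_{\beta < \lambda} A^\beta = A^\lambda$ is inherited from the $n$-th presentation; while at a block boundary $\lambda = \omega n$ with $n \geq 1$ we have $\bigcup_{\beta < \omega n} A^\beta = \bigcup_{m < n} \bigcup_{\alpha < \omega} B^\alpha_m = \bigcup_{m < n} \ex^{m+1} X = \ex^n X = B^0_n = A^{\omega n}$. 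Finally, for a successor $\beta + 1$ with $\beta = \omega n + \alpha$, the inclusion $A^\beta \hookrightarrow A^{\beta+1}$ is exactly $B^\alpha_n \hookrightarrow B^{\alpha+1}_n$, a pushout of a coproduct of horn inclusions by hypothesis; this is a statement about a commuting square in $\sset$ and does not depend on any ambient object. Thus $(A^\beta)_{\beta \leq \omega^2}$ is an anodyne presentation for $j$, and $j \colon X \to \exinf X$ is a strong anodyne extension.

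The argument is really just bookkeeping; the one point worth attention is the passage through the block-boundary limit ordinals $\omega\cdot n$, which is exactly where one uses that the colimit defining $\exinf X$ is the union of the subcomplexes $\ex^n X$ rather than something more complicated.
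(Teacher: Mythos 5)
Your proof is correct and is essentially the argument the paper leaves implicit: the corollary is stated without proof because $\exinf X$ is the sequential colimit of the strong anodyne extensions $j_{\ex^n X}$ and strong anodyne extensions are closed under such (transfinite) composition, which is exactly the splicing of presentations you carry out. Your careful handling of the block-boundary limit ordinals and of the identification of the colimit with the increasing union of subcomplexes is the right bookkeeping and contains no gaps.
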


The following useful properties of $\exinf$ are well-known but not for \emph{strong} anodyne extensions.
(See \ref{def-classes} for the definitions of fibration and trivial fibration).

\begin{proposition}\label{prop-properties}
  $\exinf$ preserves: finite limits, monomorphisms, strong anodyne extensions, fibrations, trivial fibrations, simplicial homotopies, homotopy equivalences.
\end{proposition}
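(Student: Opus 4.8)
The plan is to handle the seven properties in three groups. The formal ones come first. Since $\ex$ is a right adjoint it preserves all limits, and since $\sd\triangle^n$ is a finite connected simplicial set the functor $\sset(\sd\triangle^n,-)$ preserves filtered colimits and coproducts; hence $\ex$ preserves finite limits, monomorphisms, filtered colimits and coproducts. As $\exinf$ is the filtered colimit of the iterates $\ex^r$, and in $\sset$ filtered colimits commute with finite limits and preserve monomorphisms, $\exinf$ inherits preservation of finite limits and of monomorphisms. For simplicial homotopies one uses that $\exinf$ preserves products and that $j\colon\triangle^0\to\exinf\triangle^0$ is the identity (because $\ex\triangle^0=\triangle^0$): a homotopy $H\colon X\times\triangle^1\to Y$ yields
\begin{displaymath}
\exinf X\times\triangle^1\xrightarrow{\ \id\times j\ }\exinf X\times\exinf\triangle^1\cong\exinf(X\times\triangle^1)\xrightarrow{\ \exinf H\ }\exinf Y,
\end{displaymath}
whose restrictions to the two end vertices are $\exinf$ applied to those of $H$. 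Preservation of homotopy equivalences is then immediate: apply $\exinf$, and this construction, to the two defining homotopies of a homotopy inverse.

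Next, fibrations and trivial fibrations, via the adjunction $\sd\dashv\ex$. The functor $\sd$ preserves monomorphisms, and by Proposition~\ref{prop-subd-sae} it sends horn inclusions to strong anodyne extensions. A fibration $p$ has the right lifting property against every strong anodyne extension --- the class of maps with the left lifting property against $p$ being closed under pushout, transfinite composition and coproduct --- hence against $\sd$ of every horn inclusion; by adjunction $\ex p$ has the right lifting property against every horn inclusion, so it is a fibration. Likewise, if $p$ has the right lifting property against all monomorphisms then so does $\ex p$, since $\sd$ carries monomorphisms to monomorphisms; thus $\ex$ preserves trivial fibrations. To pass to $\exinf$, note that the (co)domains of horn inclusions and of boundary inclusions are finite, hence compact: a lifting problem against $\exinf p$ factors through $\ex^r p$ for some finite $r$ (using that each transition map $\ex^r Y\to\exinf Y$ is a monomorphism to see that the square already commutes at stage $r$), and $\ex^r p$ is a fibration, resp.\ trivial fibration, by induction; the lift transports back up.

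Finally, strong anodyne extensions, where the real work lies. Let $m\colon A\hookrightarrow B$ be a strong anodyne extension. I would first observe that, for \emph{any} monomorphism $m$, the naturality square
\begin{displaymath}
\begin{array}{ccc}
A & \longrightarrow & \exinf A\\
\downarrow & & \downarrow\\
B & \longrightarrow & \exinf B
\end{array}
\end{displaymath}
is a pullback: a compatible pair, restricted along the canonical section of the iterated last-vertex map $\sd^r\triangle^n\to\triangle^n$, has its $B$-component factoring through $A$. Hence $\exinf A\cap B=A$ inside $\exinf B$, so $\exinf A\cup B$ is the pushout $\exinf A\amalg_A B$ and $\exinf A\to\exinf A\cup B$ is a pushout of $m$, a strong anodyne extension (these are closed under pushout: translate the anodyne presentation). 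It remains to present $\exinf A\cup B\hookrightarrow\exinf B$ as a strong anodyne extension, and for this I would restrict the (natural) P-structure that Theorem~\ref{thm-ex-sae} and the corollary after it place on $j_B\colon B\to\exinf B$: a simplex of $\nond{(\exinf B)}\setminus\nond{(\exinf A\cup B)}$ is declared to be of type II exactly when it is so for $j_B$, with the same $\parent$. All clauses of Definition~\ref{def-structure} are inherited; the points to check are that $\parent$ still restricts to a bijection and that the ancestral relation stays well-founded. The first reduces to the claim that a type II simplex $\sigma$ of $\exinf B$ lying in the subcomplex $\exinf A$ has $\parent\sigma\in\exinf A$ too, which is clear from the recipe $\parent\sigma=\sigma\circ r_n^k$: the factorization of $\sigma$ through $\exinf m$ lies in the same $J_n^k$ (as $\exinf m$ is monic and $J_n^k$ is detected by the idempotent $j_n^k$), and composing it with $r_n^k$ factors $\parent\sigma$ through $\exinf m$ as well. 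Well-foundedness is automatic, the new ancestral relation being obtained from the old one by making more simplices minimal. Since strong anodyne extensions are closed under composition, $\exinf m$ is one.

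The main obstacle is exactly this last step: choosing the right P-structure on $\exinf A\cup B\hookrightarrow\exinf B$ and checking its compatibility with the subcomplex $\exinf A$. Everything turns on the P-structure of Theorem~\ref{thm-ex-sae} being natural and local enough to restrict --- which is precisely why describing it by the one-line formula $\parent\sigma=\sigma\circ r_n^k$ pays off --- and the verification just reuses the commutation equations of Lemma~\ref{lem-jr-eqns} relating the $j_n^k$ and the $r_n^k$.
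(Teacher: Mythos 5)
Your proposal is correct and follows essentially the same route as the paper: the formal properties via $\exinf$ being a filtered colimit of right adjoints, (trivial) fibrations via the adjunction $\sd\dashv\ex$ together with Proposition~\ref{prop-subd-sae}, and, for strong anodyne extensions, the identical factorization of $\exinf m$ through $B\cup\exinf A$ as a pushout of $m$ followed by the inclusion carrying the restricted P-structure of $j_B$. You merely make explicit two points the paper leaves as ``easy to check'' --- that the naturality square is a pullback (so the union really is a pushout) and that the formula $\parent\sigma=\sigma\circ r_n^k$ is compatible with the subcomplex $\exinf A$ --- and your assignment of which lemma justifies preservation of fibrations versus trivial fibrations is the correct one.
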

\begin{proof}
  Preservation of finite limits follows from the fact that $\exinf$ is a filtered colimit of right adjoints --- preservation of monomorphisms, homotopies and homotopy equivalences now follows easily.
  To see that $\exinf$ preserves trivial fibrations it is enough to see that $\sd$ sends the boundary inclusions to monomorphisms, but this is obvious.
  Preservation of fibrations is similar, using \ref{prop-subd-sae}.

  Finally, let $m : X \to Y$ be a strong anodyne extension and consider the diagram:
  \begin{displaymath}
    \begin{tikzpicture}
      \node (X) at (0,0) {$X$};
      \node (Y) at (0,-2) {$Y$};
      \node (ex) at (2.5,0) {$\exinf X$};
      \node (ey) at (4,-3) {$\exinf Y$};
      \node (un) at (2.5,-2) {$Y \cup \exinf X$};

      \draw[->,font=\footnotesize] (X)  edge (ex)
      edge node[auto,swap] {$m$} (Y)
      (ex) edge (un)
      edge[bend left=20] node[auto] {$\exinf m$}(ey)
      (Y)  edge (un)
      edge[bend right=20] (ey);
      \draw[->,dotted] (un) edge (ey);
      \draw (un) ++(-.5,.2) -- ++(0,0.3) -- ++(0.3,0);
    \end{tikzpicture}
  \end{displaymath}
  It is clear how every arrow in the diagram admits a regular pairing (transferring across pushouts in the obvious way), except possibly for $\exinf m$ and the dotted arrow.
  Moreover, the pairing on $Y \to Y \cup \exinf X$ is given by restriction from the one on $Y \to \exinf Y$.
  It is easy to check that in this situation the pairing on $Y \to \exinf Y$ also restricts to a pairing on the dotted arrow and that this restricted pairing is also regular.
  Hence $\exinf m$ is the composite of two strong anodyne extensions.
\end{proof}

The following proposition is the last difficult step in establishing the model structure axioms on $\sset$.
After this we rely only on elementary results found in the literature.

\begin{proposition}\label{prop-reflects}
  $\exinf$ reflects the triviality of fibrations.
  That is to say, if $f : X \to Y$ is a fibration and $\exinf f$ (a fibration by Proposition \ref{prop-properties}) is moreover a \emph{trivial} fibration, then $f$ is also a trivial fibration.
\end{proposition}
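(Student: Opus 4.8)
The plan is to verify the right lifting property of $f$ against each boundary inclusion $\partial\triangle^n\hookrightarrow\triangle^n$ directly. Suppose then that we are given $a\colon\partial\triangle^n\to X$ and $b\colon\triangle^n\to Y$ with $fa=b|_{\partial\triangle^n}$. Composing with $j$ and using that $\exinf f$ is a trivial fibration yields a diagonal $c\colon\triangle^n\to\exinf X$ with $c|_{\partial\triangle^n}=j_X a$ and $(\exinf f)\,c=j_Y b$. Since $\triangle^n$ is finite, $c$ factors through some finite stage $\ex^N X$ of the colimit defining $\exinf X$, and — using finiteness of $\partial\triangle^n$ and $\triangle^n$ and that the colimit injections are monomorphisms — we may assume the two equations already hold at that stage; i.e., composing $a$ and $b$ with the $N$-fold comparison maps $X\to\ex^N X$ and $Y\to\ex^N Y$ gives a lifting problem that is solvable against $\ex^N f$. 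Each comparison $\ex^m X\to\ex^{m+1}X$ is the strong anodyne extension $j_{\ex^m X}$ of Theorem~\ref{thm-ex-sae}, and each $\ex^m f$ is a fibration (because $\sd$ preserves boundary inclusions, as in the proof of \ref{prop-properties}), so it suffices to descend one application of $\ex$ at a time, i.e.\ to prove:

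\emph{Descent Lemma.} For any fibration $g\colon E\to B$: if the lifting problem $(\partial\triangle^n\xrightarrow{j_E a}\ex E,\ \triangle^n\xrightarrow{j_B b}\ex B)$ is solvable against $\ex g$, then $(a,b)$ is solvable against $g$.

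Feeding this the stage-$N$ solution and iterating down to stage $0$ then finishes the proof. To prove the Descent Lemma, transpose along $\sd\dashv\ex$: the hypothesis becomes a map $\tilde c\colon\sd\triangle^n\to E$ with $g\tilde c=b\circ\bar\jmath_n$ and $\tilde c|_{\sd\partial\triangle^n}=a\circ\bar\jmath_n|_{\sd\partial\triangle^n}$, and we must produce $a'\colon\triangle^n\to E$ with $ga'=b$ and $a'|_{\partial\triangle^n}=a$. Let $\iota\colon\triangle^n\to\sd\triangle^n$ be the section of $\bar\jmath_n$ given by $i\mapsto\{0,\dots,i\}$. Then $c_0:=\tilde c\iota$ is a lift of $b$, and because $\iota$ carries the face of $\triangle^n$ opposite the top vertex into $\sd\partial\triangle^n$, one checks that $c_0$ already agrees with $a$ on that face. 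To repair the remaining faces, use the nerve $\eta\colon\sd\triangle^n\times\triangle^1\to\sd\triangle^n$ of the pointwise inequality $\id\leq\iota\bar\jmath_n$ of endomaps of the poset $\nond{(\triangle^n)}$; it satisfies $\bar\jmath_n\circ\eta=\bar\jmath_n\circ\mathrm{pr}$, so $\tilde c\circ\eta$, restricted to $\sd\partial\triangle^n\times\triangle^1$, is a homotopy over $B$ between the composites of $a$ and of $c_0|_{\partial\triangle^n}$ with the last-vertex map of $\partial\triangle^n$. Granting for the moment that this can be ``un-subdivided'' to a genuine homotopy $h\colon\partial\triangle^n\times\triangle^1\to E$ from $a$ to $c_0|_{\partial\triangle^n}$ over $b|_{\partial\triangle^n}$, gluing $h$ with $c_0$ along the strong anodyne extension $(\partial\triangle^n\times\triangle^1)\cup(\triangle^n\times\{1\})\hookrightarrow\triangle^n\times\triangle^1$ of Corollary~\ref{cor-anodyne-mono} and lifting against the fibration $g$ over $b\circ\mathrm{pr}$ produces an extension $\triangle^n\times\triangle^1\to E$ whose restriction to $\triangle^n\times\{0\}$ is the desired $a'$.

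The genuine obstacle is precisely the ``un-subdivision'' step set aside above: passing from a map out of $\sd\partial\triangle^n\times\triangle^1$ to one out of $\partial\triangle^n\times\triangle^1$. This is again a descent problem, but one whose building blocks are prisms $\triangle^{n-1}\times\triangle^1$ rather than simplices, and it cannot be carried out cell by cell, since the boundary inclusions of prisms are not anodyne and so are not filled by $g$ alone. I expect this must be handled either by formulating the Descent Lemma with an extra fixed ``parameter'' factor carried along throughout and then inducting on $n$, or — more in the spirit of the present paper — by avoiding subdivision and running the induction directly along the P-structure of Theorem~\ref{thm-ex-sae}: a would-be lift $\sigma$ lying in $J_n^k\setminus J_n^{k-1}$ appears as the face $d_{k+1}\!\parent\sigma$ of $\parent\sigma=\sigma\circ r_n^k$, all of whose other faces have strictly smaller rank, so one straightens $\sigma$ down to complexity $k-1$ by (re)filling the corresponding $(k+1)$-horn using the fibration, and then inducts on rank. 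Either way the substantive work is the bookkeeping that keeps the straightening compatible with the prescribed boundary $a$, and it parallels — but is heavier than — the well-foundedness argument in the proof of Theorem~\ref{thm-ex-sae}.
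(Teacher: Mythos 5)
Your reduction to a single application of $\ex$ is sound and matches the paper's opening move, but the proposal stops exactly where the content of the proposition begins: you concede that the ``un-subdivision'' of the homotopy from $\sd\partial\triangle^n\times\triangle^1$ to $\partial\triangle^n\times\triangle^1$ is unresolved, and that step is not a technicality. It is a descent problem of the same kind as the one being solved, so the transposition-plus-homotopy route is essentially circular as it stands; the Descent Lemma is the whole proposition and remains unproved.

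The paper's argument never transposes along $\sd\dashv\ex$ and never manipulates subdivided prisms. Given the lift $\tau\colon\triangle^n\to\ex X$, write $\tau=(\ex X)(S)(\upsilon)$ with $\upsilon$ non-degenerate and form the \emph{minimal anodyne subextension} $X[\tau]\subseteq\ex X$: its non-degenerate simplices outside $X$ are $\upsilon$, all $\sigma$ with $\sigma\ancpre\upsilon$ for the ancestral preorder of the P-structure of Theorem \ref{thm-ex-sae}, and $\parent\upsilon$ if $\upsilon$ is type II; by construction $X\hookrightarrow X[\tau]$ is a strong anodyne extension. The decisive computation --- exactly the ``bookkeeping'' you set aside --- is that $\ex f$ carries all of $X[\tau]$ into the image of $j_Y\colon Y\to\ex Y$: one inducts along $\anc$, the base case being simplices of $X$ (whose images lie in $j_Y(Y)$ since $(\ex f)\circ j_X=j_Y\circ f$), and the inductive steps being that faces and precomposition with $r_n^k$ preserve the low filtration stages, which are special cases of \eqref{eq:j-face-j} and \eqref{eq:r-j-j-r}. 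This yields a map $X[\tau]\to Y$ extending $f$ on $X$, and since $f$ is a fibration and $X\to X[\tau]$ is anodyne, the identity of $X$ extends to a retraction $X[\tau]\to X$ over $Y$; the image of $\tau$ under this retraction solves the original lifting problem. The fibration is thus used once, against the single anodyne map $X\hookrightarrow X[\tau]$, and compatibility with the prescribed boundary is automatic because $\partial\triangle^n$ already maps into $X\subseteq X[\tau]$, which the retraction fixes. Your closing guess about ``running the induction along the P-structure'' points in the right direction, but the idea you are missing is the factorization of $X[\tau]\to\ex Y$ through $Y$.
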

\begin{proof}
  Consider the lifting problem
  \begin{displaymath}
    \begin{tikzpicture}
      \node (B) at (0,0) {$\partial \triangle^n$};
      \node (T) at (0,-2) {$\triangle^n$};
      \node (X) at (2,0) {$X$};
      \node (Y) at (2,-2) {$Y$\rlap{ .}};
      \draw[->] (B) edge (T) edge node[auto] {$b$} (X) (X) edge (Y) (T) edge node[auto,swap] {$\sigma$} (Y);
      \draw[->,dotted] (T) -- (X);
    \end{tikzpicture}
  \end{displaymath}
  There is a diagonal filler if we compose this square with the naturality square for $j : \id \to \exinf$:
  \begin{displaymath}
    \begin{tikzpicture}
      \node (B) at (0,0) {$\partial \triangle^n$};
      \node (T) at (0,-2) {$\triangle^n$};
      \node (X) at (2,0) {$X$};
      \node (Y) at (2,-2) {$Y$};
      \node (eix) at (4,0) {$\exinf X$};
      \node (eiy) at (4,-2) {$\exinf Y$\rlap{ .}};
      \draw[->] (B) edge (T) edge node[auto] {$b$} (X) (X) edge (Y) edge (eix) (T) edge node[auto,swap] {$\sigma$} (Y) (Y) edge (eiy) (eix) edge (eiy);
      \draw[->,preaction={draw=white,line width=6pt}] (T) -- (eix);
    \end{tikzpicture}
  \end{displaymath}
  By finiteness of $\triangle^n$, we can replace $\exinf$ with $\ex^n$ for some $n$.
  By an inductive argument, we can assume that $n=1$.
  \begin{displaymath}
    \begin{tikzpicture}
      \node (B) at (0,0) {$\partial \triangle^n$};
      \node (T) at (0,-2) {$\triangle^n$};
      \node (X) at (2,0) {$X$};
      \node (Y) at (2,-2) {$Y$};
      \node (eix) at (4,0) {$\ex X$};
      \node (eiy) at (4,-2) {$\ex Y$\rlap{ .}};
      \draw[->] (B) edge (T) edge node[auto] {$b$} (X) (X) edge (Y) edge (eix) (T) edge node[auto,swap] {$\sigma$} (Y) (Y) edge (eiy) (eix) edge (eiy);
      \draw[->,preaction={draw=white,line width=6pt}] (T) -- node[auto,pos=0.3] {$\tau$} (eix);
    \end{tikzpicture}
  \end{displaymath}

  Let $X[\tau]$ be the `minimal anodyne subextension' of $X \to \ex X$ containing $\tau$.
  Letting $\tau = (\ex X)(S)(\upsilon)$, where $\upsilon$ is a nondegenerate $m$-simplex of $\ex X$ and $S : [n] \to [m]$ is a surjection in $\Delta$, this is the subcomplex whose nondegenerate simplices are
  \begin{displaymath}
    \nond{(X[\tau])} = \nond{(j_X(X))} \cup \{\xi \in \nond{(\ex X)}\backslash\nond{(j_X(X))} \mid \xi \ancpre \upsilon \}
  \end{displaymath}
  where $\ancpre$ is the ancestral preorder defined in \ref{def-ancestral-pre}.
  Clearly $X \to X[\tau]$ is a strong anodyne extension, since it admits a pairing by restriction from $X \to \ex X$ which is clearly regular.
  The crucial point now is that $X[\tau] \to \ex X \to \ex Y$ factorizes through $Y \to \ex Y$.
  We show that the set
  \begin{displaymath}
    D = \{ x \in \nond{(\ex X)} \mid f(x) \in J_{\dim x}^0(Y) \}
  \end{displaymath}
  is $\ancpre$-downwards closed and contains $\nond{(X)}$ and $\upsilon$.
  For downwards closure we need to show, for any $\rho \in J_p^0(Y)$, that each face $\rho \circ (\sd d^i)$ is in $J_{p-1}^0(Y)$ and $\rho \circ r_p^h$ is in $J_{p+1}^0(Y)$ for any $1 \leq h \leq p$.
  These are just special cases of equations \eqref{eq:j-face-j} and \eqref{eq:r-j-j-r} from Lemma \ref{lem-jr-eqns}.
  That $\nond{(X)} \subseteq D$ is obvious, and $\upsilon \in D$ since the image of $\upsilon$ is a face of the image of $\tau$ (which is the image in $\ex Y$ of $\sigma \in Y$), so $f(\upsilon) \in J_{\dim \upsilon}^0(Y)$ by \eqref{eq:j-face-j} of \ref{lem-jr-eqns}.

  Now consider the lifting problem
  \begin{displaymath}
    \begin{tikzpicture}
      \node (x1) at (0,0) {$X$};
      \node (xt) at (0,-2) {$X[\tau]$};
      \node (x2) at (2,0) {$X$};
      \node (y) at (2,-2) {$Y$\rlap{ .}};
      \draw[->] (x1) edge (xt) (xt) edge (y) (x2) edge node[auto] {$f$} (y);
      \draw[->] (x1) edge (x2);
      \draw[->,dotted] (xt) -- (x2);
    \end{tikzpicture}
  \end{displaymath}
  Since $f$ is a fibration and $X \to X[\tau]$ is a strong anodyne extension, this lifting problem has a solution.
  But the image of $\tau$ under this map is a solution to the original lifting problem.
\end{proof}

\section{The Model Structure}\label{sec-model-structure}

The definition of (closed) model category may be found in section I.9 of \cite{GOERSSJARDINE1999}.
We begin with the definition of the classes of morphisms.
It is easiest to describe all five classes and then subsequently to show that they are interrelated correctly, i.e.\ that $\tc = \ww \cap \cc = {}^\pitchfork \ff$, etc.

\begin{definition}\label{def-classes}
  The class $\ww$ of \emph{weak equivalences} is the class of all morphisms $f$ such that $\exinf f$ is a simplicial homotopy equivalence.
  The class $\cc$ of \emph{cofibrations} is the class of monomorphisms.
  The class $\tc$ of \emph{trivial cofibrations} is the class of morphisms which are retracts of strong anodyne extensions.
  The class $\ff$ of \emph{fibrations} is the class of morphisms with the right lifting property with respect to all horn inclusions.
  The class $\tf$ of \emph{trivial fibrations} is the class of morphisms with the right lifting property with respect to all monomorphisms, (or equivalently, to all boundary inclusions).
\end{definition}

\begin{lemma}\label{lem-weq}
  The weak equivalences have the 2-out-of-3 property and are closed under retracts.
\end{lemma}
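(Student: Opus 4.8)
The plan is to exhibit $\ww$ as the preimage of the class of isomorphisms under a suitable functor, because the preimage of the isomorphisms under \emph{any} functor automatically enjoys the $2$-out-of-$3$ property and closure under retracts. Concretely, write $\mathrm{Kan}$ for the full subcategory of $\sset$ on the Kan complexes, and let $\mathrm{Ho}$ be the category with the same objects but whose morphisms are simplicial homotopy classes of maps. By \ref{prop-exinf-kan}, $\exinf$ restricts to a functor $\sset \to \mathrm{Kan}$; composing it with the quotient $\mathrm{Kan} \to \mathrm{Ho}$ yields a functor $G \colon \sset \to \mathrm{Ho}$. A map $p$ between Kan complexes is, by definition, a simplicial homotopy equivalence exactly when its class admits a two-sided inverse in $\mathrm{Ho}$, i.e.\ when $G$ sends it to an isomorphism; so $f \in \ww$ if and only if $G(f)$ is an isomorphism.

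First I would verify that $\mathrm{Ho}$ really is a category, that is, that for a Kan complex $Y$ and an arbitrary $X$ the relation of simplicial homotopy is an equivalence relation on $\sset(X,Y)$ compatible with pre- and post-composition. Reflexivity is witnessed by the constant homotopy and compatibility with composition is formal. Symmetry and transitivity follow by the standard argument: applying \ref{cor-anodyne-mono} to the (strong anodyne) horn inclusion $\horn^2_k \hookrightarrow \triangle^2$ and the monomorphism $\emptyset \hookrightarrow X$ shows that $\horn^2_k \times X \hookrightarrow \triangle^2 \times X$ is a strong anodyne extension, hence has the left lifting property against $Y \to \triangle^0$; placing two given homotopies (or one homotopy and a constant one) on two faces of $\triangle^2 \times X$ and filling, one reads the required homotopy off the third face.

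It then remains to run the ``preimage of the isomorphisms'' argument. For $2$-out-of-$3$: whenever $g \circ f$ is defined and two of $f$, $g$, $g \circ f$ lie in $\ww$, then two of $G(f)$, $G(g)$, $G(g \circ f) = G(g) \circ G(f)$ are isomorphisms of $\mathrm{Ho}$, hence so is the third, so the corresponding map lies in $\ww$. For closure under retracts: if $f$ is a retract of $g$ in the arrow category of $\sset$ and $g \in \ww$, then applying $G$ exhibits $G(f)$ as a retract of the isomorphism $G(g)$ in the arrow category of $\mathrm{Ho}$; since a retract of an isomorphism in any category is an isomorphism, $f \in \ww$.

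The only ingredient here that is not pure diagram-chasing is the fact recalled in the second paragraph --- that homotopy of maps into a Kan complex is an equivalence relation --- and that is precisely the point at which \ref{cor-anodyne-mono} enters; beyond this I do not anticipate any genuine obstacle.
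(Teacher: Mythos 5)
Your proof is correct and is essentially the paper's intended argument: the paper dismisses the statement as ``an easy exercise'' about homotopy equivalences, and you carry out that exercise in the standard way, realizing $\ww$ as the preimage of the isomorphisms of the homotopy category of Kan complexes under $X \mapsto \exinf X$, with the Kan condition (via \ref{prop-exinf-kan} and \ref{cor-anodyne-mono}) guaranteeing that homotopy is an equivalence relation so that this category exists. No gaps.
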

\begin{proof}
  It is an easy exercise that simplicial homotopy equivalences have these properties.
\end{proof}

\begin{lemma}\label{lem-factorization}
  Any map $f : X \to Y$ can be factorized as a cofibration followed by a fibration, and we may choose either map to be trivial.
\end{lemma}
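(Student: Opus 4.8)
The plan is to invoke Quillen's small object argument twice: once with the set of horn inclusions and once with the set of boundary inclusions. In each case one of the two resulting factors will automatically land in the required class, so it only remains to identify the other factor.

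First I would note that every $\triangle^n$ is a finite simplicial set, hence so is every horn $\horn^n_k$ and every boundary $\partial\triangle^n$; thus all the relevant domains are small relative to all of $\sset$ and the small object argument applies. Applied to the set $\{\horn^n_k \hookrightarrow \triangle^n\}$, it factorizes $f$ as $f = p \circ i$ where $p$ has the right lifting property against every horn inclusion --- that is, $p$ is a fibration in the sense of \ref{def-classes} --- and $i$ is a transfinite composite of pushouts of coproducts of horn inclusions. By \ref{def-sae} (see also Remark \ref{rem-sae}) this exhibits $i$ as a strong anodyne extension, so $i$ is a trivial cofibration by \ref{def-classes}; and since horn inclusions are monomorphisms and monomorphisms in $\sset$ are stable under pushout and transfinite composite, $i$ is a cofibration too. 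This is the factorization with the cofibration trivial.

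For the second factorization I would instead apply the small object argument to $\{\partial\triangle^n \hookrightarrow \triangle^n\}$, writing $f = q \circ j$ with $j$ a transfinite composite of pushouts of coproducts of boundary inclusions and $q$ right orthogonal to every boundary inclusion. As before $j$ is a monomorphism, hence a cofibration. To see that $q$ is a trivial fibration, recall the classical skeletal filtration: for any monomorphism $A \hookrightarrow B$, the subcomplexes $A \cup \mathrm{sk}_n B$ express it as a transfinite composite of pushouts of coproducts of boundary inclusions (one $\partial\triangle^n \hookrightarrow \triangle^n$ for each non-degenerate simplex of $B$ not already in $A$). Hence right orthogonality to boundary inclusions forces right orthogonality to all monomorphisms, so $q$ is a trivial fibration by \ref{def-classes}; and $q$ is a fibration because every horn inclusion is a monomorphism. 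This is the factorization with the fibration trivial.

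I do not expect any genuine obstacle. The one place to be a little careful is checking the hypotheses that feed the small object argument --- smallness, handled by finiteness of the $\triangle^n$ --- together with the two ``cell complex'' identifications, namely that a relative cell complex of horn inclusions is precisely a strong anodyne extension (essentially Remark \ref{rem-sae}) and that a relative cell complex of boundary inclusions is precisely a monomorphism (the skeletal decomposition); both are standard.
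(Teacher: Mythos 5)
Your proposal is correct and is exactly the argument the paper intends: the paper's proof simply cites Quillen's small object argument and notes that the (trivial cofibration)--fibration factorization produces a genuine strong anodyne extension as the first factor, which is precisely your first factorization. The details you supply (smallness via finiteness of the domains, and the skeletal decomposition identifying relative cell complexes of boundary inclusions with monomorphisms) are the standard ones left implicit in the paper.
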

\begin{proof}
  This is Quillen's small object argument introduced in section II.\S 3 of \cite{QUILLEN1967}.
  Note that in the factorization into a trivial cofibration followed by a fibration we in fact get a strong anodyne extension for the first factor.
\end{proof}

\begin{lemma}\label{lem-saturation}
  The saturation axiom holds for these classes.
  That is, the class of trivial cofibrations is precisely the class of maps with the left lifting property with respect to the fibrations, as is the class of cofibrations to the trivial fibrations; and the class of trivial fibrations is precisely the class of maps with the right lifting property with respect to the cofibrations, as is the class of fibrations to the trivial cofibrations.
\end{lemma}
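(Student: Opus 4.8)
The plan is to deduce all four identifications formally from the two factorizations of Lemma~\ref{lem-factorization} and the standard ``retract argument'', using besides these only that a class of maps defined by a left (or right) lifting condition is closed under retracts, and --- on the cofibration side --- that the class of maps with a fixed right lifting property is moreover closed under pushout, coproduct and transfinite composition. In particular the proof does not involve the weak equivalences of Definition~\ref{def-classes} at all.

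Two of the four identifications are almost immediate. A trivial fibration is \emph{defined} to be a map with the right lifting property against every monomorphism, i.e.\ against every cofibration, so ``trivial fibrations $=$ maps with the right lifting property against the cofibrations'' holds by definition. For the cofibrations: if $m$ is a monomorphism and $p$ a trivial fibration, then $p$ has the right lifting property against $m$, i.e.\ $m$ has the left lifting property against $p$; conversely, if $f$ has the left lifting property against every trivial fibration, I factor $f = p \circ i$ with $i$ a monomorphism and $p$ a trivial fibration (Lemma~\ref{lem-factorization}), note that $f$ lifts against $p$, and apply the retract argument to exhibit $f$ as a retract of $i$ --- hence a monomorphism, since monomorphisms are closed under retracts.

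For the fibration side, first observe that each horn inclusion is a strong anodyne extension via its evident one-step anodyne presentation, hence a trivial cofibration, and that every strong anodyne extension, and \emph{a fortiori} every retract of one, lies in the saturation of the set of horn inclusions; consequently any fibration has the right lifting property against every trivial cofibration. This is the inclusion ``fibrations $\subseteq$ maps with the right lifting property against the trivial cofibrations'', and the reverse inclusion is trivial because horn inclusions are themselves trivial cofibrations. Dually, ``trivial cofibrations $\subseteq$ maps with the left lifting property against the fibrations'' is this same fact read from the other side; and for the reverse inclusion, given $f$ with the left lifting property against every fibration, I factor $f = p \circ i$ with $i$ a strong anodyne extension and $p$ a fibration (the second clause of Lemma~\ref{lem-factorization}), note that $f$ lifts against $p$, and use the retract argument to realise $f$ as a retract of the strong anodyne extension $i$, i.e.\ as a trivial cofibration.

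I do not expect a genuine obstacle: the whole argument is a routine assembly of the retract argument with the factorization lemma. The only input that is not pure formal nonsense is the remark that a fibration, by definition having the right lifting property against the horn inclusions, automatically has it against everything generated from them by pushout, coproduct and transfinite composition --- in particular against every strong anodyne extension --- which is the usual saturation property of right lifting classes.
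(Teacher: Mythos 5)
Your proposal is correct and follows essentially the same route as the paper: the trivial-fibration characterization is definitional, the fibration/trivial-cofibration duality comes from the closure of left lifting classes under pushout, coproduct, transfinite composition and retract applied to the horn inclusions, and the remaining two identifications follow from Lemma~\ref{lem-factorization} via the retract argument together with closure of monomorphisms (resp.\ retracts of strong anodyne extensions) under retracts. The paper's proof is just a terser statement of exactly these steps.
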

\begin{proof}
  The third assertion is nothing more than Definition \ref{def-classes}, and the fourth follows from the standard colimit closure properties of left classes, (see section IV.3 of \cite{GABRIELZISMAN1967}).
  The former two assertions are now easy consequences of \ref{lem-factorization} and the fact that monomorphisms are stable under retracts.
\end{proof}

\begin{remark}
  Lemma \ref{lem-saturation} is the only place in this argument where we essentially require the axiom of choice: for a given fibration, we choose lifts against every horn inclusion to show it has the right lifting property with respect to all strong anodyne extensions.
  We can avoid it by changing Definition \ref{def-classes} so that the fibrations are defined to admit such a simultaneous choice of liftings, or equivalently that they have the right lifting property with respect to all anodyne extensions (or just the strong ones).
  This is in practice no great inconvenience since this data is what we actually get from many natural constructions, including the small object argument.
  Now results such as \ref{prop-properties} can be seen as transporting the \emph{structure} of a Kan fibration on $f$ to the structure of a Kan fibration on $\exinf f$.
\end{remark}

To complete the proof, we need a few basic results which have short, elementary proofs in the literature.

\begin{lemma}\label{lemb-anod}
  An anodyne extension between Kan complexes is part of a simplicial homotopy equivalence.
\end{lemma}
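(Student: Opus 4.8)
The plan is to realise $i$ as a (strong) deformation retract inclusion, using the two lifting axioms already recorded in Lemma~\ref{lem-saturation} together with the pushout--product statement of Corollary~\ref{cor-anodyne-mono}; there is no genuine difficulty. Write $i \colon A \hookrightarrow B$ for the given anodyne extension. First I would use that $i$, being a trivial cofibration, has the left lifting property against every fibration (Lemma~\ref{lem-saturation}), and that $A$ is a Kan complex, so $A \to \ast$ is a fibration. Lifting $\id_A$ in the evident square then yields a map $r \colon B \to A$ with $r i = \id_A$.

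It remains to produce a simplicial homotopy $H \colon B \times \triangle^1 \to B$ from $i r$ to $\id_B$. I would first specify its restriction to the subcomplex $(B \times \partial\triangle^1) \cup (A \times \triangle^1)$, which is the pushout of $B \times \partial\triangle^1 \leftarrow A \times \partial\triangle^1 \rightarrow A \times \triangle^1$: on $B \times \{0\}$ take $i r$, on $B \times \{1\}$ take $\id_B$, and on $A \times \triangle^1$ take $i$ precomposed with the projection $A \times \triangle^1 \to A$. These agree on the overlaps since $i r i = i = \id_B \circ i$, so this data glues to a map $h$ out of the pushout. By Corollary~\ref{cor-anodyne-mono} the inclusion
\[
(B \times \partial\triangle^1) \cup (A \times \triangle^1) \hookrightarrow B \times \triangle^1
\]
is a strong anodyne extension whenever $i$ is; and in general, presenting $i$ as a retract of a strong anodyne extension (Remark~\ref{rem-sae}) and applying the pushout--product construction presents the displayed map as a retract of a strong anodyne extension, hence it is a trivial cofibration. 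Since $B$ is a Kan complex, $B \to \ast$ is a fibration, so Lemma~\ref{lem-saturation} supplies an extension $H \colon B \times \triangle^1 \to B$ of $h$. This $H$ is the desired homotopy, and together with $r i = \id_A$ it shows that $i$, with partner $r$, is part of a simplicial homotopy equivalence.

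The one point that carries any content is that the pushout--product inclusion above is anodyne; but this is exactly Corollary~\ref{cor-anodyne-mono} up to a routine retract argument, so I expect the whole proof to be two short applications of the lifting axioms with no real obstacle.
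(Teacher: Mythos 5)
Your argument is correct: it is the standard deformation-retract proof (retract $r$ from lifting $\id_A$ against $A \to \ast$, then a homotopy $ir \simeq \id_B$ rel $A$ obtained by extending along the pushout--product inclusion into the Kan complex $B$), and the paper itself gives no proof here, simply citing Proposition 3.2.3 of Joyal--Tierney, which proceeds in essentially the same way. Your version has the mild virtue of being self-contained within the paper, resting only on Lemma~\ref{lem-saturation} and Corollary~\ref{cor-anodyne-mono}, both of which are established independently of this lemma, so there is no circularity.
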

\begin{proof}
  Proposition 3.2.3 in \cite{JOYALTIERNEY2008}
\end{proof}

\begin{lemma}\label{lemb-triv}
  Every trivial fibration is part of a simplicial homotopy equivalence.
\end{lemma}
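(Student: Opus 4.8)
Proof proposal for Lemma \ref{lemb-triv} (Any trivial fibration is part of a simplicial homotopy equivalence).

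The plan is to show directly that a trivial fibration $p \colon X \to Y$ admits a section $s$ with $p s = \id_Y$ and $s p \simeq \id_X$, so that $(p, s)$ is a simplicial homotopy equivalence. First I would produce the section: since $p$ has the right lifting property with respect to all monomorphisms (Definition \ref{def-classes}), and in particular with respect to $\emptyset \hookrightarrow Y$, lifting $\id_Y$ against $p$ along this inclusion yields $s \colon Y \to X$ with $p s = \id_Y$. It remains to construct a simplicial homotopy $H \colon X \times \triangle^1 \to X$ from $\id_X$ to $s p$ over which nothing goes wrong.

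The key step is to set up the homotopy as a lifting problem. Consider the inclusion $(X \times \partial \triangle^1) \cup (\text{nothing extra}) \hookrightarrow X \times \triangle^1$; more precisely, the map $i \colon X \times \partial\triangle^1 \hookrightarrow X \times \triangle^1$ is a monomorphism, so I would form the square whose top edge $X \times \partial\triangle^1 \to X$ is $\id_X$ on one copy of $X$ and $s p$ on the other, whose bottom edge $X \times \triangle^1 \to Y$ is the composite $X \times \triangle^1 \xrightarrow{\text{pr}} X \xrightarrow{p} Y$, and whose right edge is $p$. This square commutes: on the copy $X \times \{0\}$ we get $p \circ \id_X = p$ and on $X \times \{1\}$ we get $p \circ s p = \id_Y \circ p = p$, both agreeing with the bottom composite restricted to the boundary. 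Since $p$ is a trivial fibration and $i$ is a monomorphism, there is a diagonal filler $H \colon X \times \triangle^1 \to X$. By construction $H$ restricts to $\id_X$ and $sp$ on the two ends, so $H$ is the desired simplicial homotopy $\id_X \simeq s p$. Together with $p s = \id_Y$ this exhibits $p$ as a simplicial homotopy equivalence, with homotopy inverse $s$.

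I expect the only mild subtlety — hardly an obstacle — to be bookkeeping with the two inclusions $\triangle^0 \hookrightarrow \triangle^1$ and making sure the two components of the map out of $X \times \partial\triangle^1$ are glued so that the square genuinely commutes; this is exactly the computation $p \circ sp = p$ done above. Everything else is a direct appeal to the lifting property packaged into the definition of trivial fibration, so no transfinite or combinatorial machinery (and in particular none of the P-structure apparatus) is needed here.
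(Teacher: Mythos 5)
Your argument is correct and complete: it is the standard proof (section via lifting against $\emptyset \hookrightarrow Y$, then a fibrewise homotopy $\id_X \simeq sp$ via lifting against $X \times \partial\triangle^1 \hookrightarrow X \times \triangle^1$). The paper does not prove this lemma itself but simply cites Proposition 3.2.5 of Joyal--Tierney, which is established by exactly this argument, so you have in effect supplied the proof the paper outsources.
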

\begin{proof}
  Proposition 3.2.5 in \cite{JOYALTIERNEY2008}.
\end{proof}

\begin{lemma}\label{lemb-fib}
  A fibration between Kan complexes which is part of a simplicial homotopy equivalence is a trivial fibration.
\end{lemma}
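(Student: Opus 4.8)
The plan is to verify directly that $f$ has the right lifting property against every boundary inclusion $\partial\triangle^n \hookrightarrow \triangle^n$; since $f$ is already assumed to be a fibration and the boundary inclusions generate the monomorphisms under the skeletal filtration, this is exactly the assertion that $f$ is a trivial fibration. The one nontrivial ingredient I would import from the literature is that, because $f \colon X \to Y$ is a Kan fibration which is moreover a homotopy equivalence, each of its fibres $f^{-1}(v)$ over a vertex $v$ of $Y$ is a \emph{contractible} Kan complex. This is a standard consequence of the long exact sequence of homotopy groups of a Kan fibration, together with the (also standard) fact that a Kan complex with trivial homotopy groups has the extension property against all boundary inclusions; I return below to why I would quote rather than reprove it.

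Granting that, fix a lifting problem $a \colon \partial\triangle^n \to X$, $b \colon \triangle^n \to Y$ with $fa = b|_{\partial\triangle^n}$. Pulling $f$ back along $b$ produces a Kan fibration $q \colon E := \triangle^n \times_Y X \to \triangle^n$, and the pair $(b|_{\partial\triangle^n}, a)$ determines $\bar a \colon \partial\triangle^n \to E$ with $q\bar a$ the inclusion; it now suffices to extend $\bar a$ to a section $\sigma$ of $q$, since then $\mathrm{pr}_2 \circ \sigma$ solves the original problem. Let $\eta \colon \triangle^n \times \triangle^1 \to \triangle^n$ be the nerve of the monotone map $[n] \times [1] \to [n]$ sending $(i,\varepsilon)$ to $i$ when $\varepsilon = 1$ and to $0$ when $\varepsilon = 0$, so that $\eta|_{\triangle^n \times \{1\}} = \id$ and $\eta|_{\triangle^n \times \{0\}}$ is constant at the vertex $0$. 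Lifting $\eta \circ (q \times \id)$ through the fibration $q$, with prescribed value $\id_E$ at time $1$ — legitimate because $E \times \{1\} \hookrightarrow E \times \triangle^1$ is a strong anodyne extension by \ref{cor-anodyne-mono}, being the pushout--product of $\horn^1_1 \hookrightarrow \triangle^1$ with $\emptyset \hookrightarrow E$ — yields $\tilde\eta \colon E \times \triangle^1 \to E$ whose time-$0$ face $\rho$ factors through the fibre $G := q^{-1}(0) \cong f^{-1}(b(0))$, which is contractible by the ingredient above.

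The rest is assembly. Put $a_0 := \rho \circ \bar a \colon \partial\triangle^n \to G$ and, using contractibility of $G$, extend it to $\bar a_0 \colon \triangle^n \to G \hookrightarrow E$. The maps $\bar a_0$ (on $\triangle^n \times \{0\}$) and $\tilde\eta \circ (\bar a \times \id)$ (on $\partial\triangle^n \times \triangle^1$) agree on the overlap $\partial\triangle^n \times \{0\}$, together lie over $\eta$, and their common domain $(\triangle^n \times \{0\}) \cup (\partial\triangle^n \times \triangle^1) \hookrightarrow \triangle^n \times \triangle^1$ is a strong anodyne extension — again \ref{cor-anodyne-mono}, this time applied to the horn inclusion $\horn^1_0 \hookrightarrow \triangle^1$ and the boundary inclusion $\partial\triangle^n \hookrightarrow \triangle^n$ — so they extend through $q$ to a map $H \colon \triangle^n \times \triangle^1 \to E$ with $qH = \eta$. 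Then $\sigma := H|_{\triangle^n \times \{1\}}$ satisfies $q\sigma = \eta|_{\triangle^n \times \{1\}} = \id$, so $\sigma$ is a section, and $\sigma|_{\partial\triangle^n} = \tilde\eta|_{\text{time }1} \circ \bar a = \bar a$, which finishes the argument.

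I expect the genuine difficulty to lie entirely in the contractibility of the fibres; everything else is bookkeeping once that is granted, and every inclusion required to be anodyne is handled mechanically by \ref{cor-anodyne-mono} applied to $\horn^1_0 \hookrightarrow \triangle^1$ or $\horn^1_1 \hookrightarrow \triangle^1$ against a boundary inclusion. It is tempting to try to establish contractibility of the fibres by hand — lift $fg \simeq \id_Y$ through $f$ to obtain an honest section $s$ with $fs = \id_Y$ and $sf \simeq \id_X$, then examine a fibre — but this runs into a real obstruction: the self-homotopy of $f$ induced by the homotopy $sf \simeq \id_X$ need not be trivial, and one cannot in general rectify it to a fibrewise homotopy, so the naive attempt to push a boundary sphere inside a fibre back into that fibre fails. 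Quoting the long exact sequence of the fibration sidesteps this entirely, which is why I would do so rather than reprove it.
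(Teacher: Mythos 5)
Your argument is correct, but it takes a genuinely different route from the paper, whose ``proof'' of this lemma is a citation of Proposition 3.2.6 of Joyal--Tierney. The reduction to a section of the pulled-back fibration $q \colon E \to \triangle^n$, the lifted contraction $\tilde\eta$ onto the fibre $G$, and the final gluing-and-lifting step are all sound, and you correctly identify the two pushout--products involved as instances of \ref{cor-anodyne-mono}. The real difference is in what you import: you quote the long exact sequence of a Kan fibration together with the Whitehead-type statement that a nonempty Kan complex with vanishing homotopy groups has the extension property against all boundary inclusions. Both are true and neither is circular here, but together they are at least as deep as the lemma itself, and they drag in precisely the apparatus of simplicial homotopy groups that this paper is organized to avoid (its three outsourced lemmas are all statements about simplicial homotopy \emph{equivalences}, provable by short direct manipulations of homotopies and lifting properties). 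The cited Joyal--Tierney argument instead corrects the homotopy inverse to an honest section $s$ with $fs = \id_Y$ and then rectifies the homotopy $sf \simeq \id_X$ to one lying \emph{over} $Y$; the resulting fibrewise deformation retraction yields the right lifting property against all monomorphisms at once. On this point your closing remark overstates the difficulty: the \emph{naive} rectification does fail, but the covering-homotopy-extension argument (lifting a suitable map $X \times \triangle^1 \times \triangle^1 \to Y$ through the fibration $f$) succeeds, and that is exactly the content of the result the paper quotes. So your proof stands, but if the goal is to keep the development as light as the paper intends, the fibrewise-retraction argument is the better ingredient to quote --- or to reprove --- than contractibility of the fibres.
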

\begin{proof}
  Proposition 3.2.6 in \cite{JOYALTIERNEY2008}.
\end{proof}

\begin{theorem}
  The classes defined in \ref{def-classes} give rise to a model structure on $\sset$.
\end{theorem}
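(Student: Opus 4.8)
The plan is to verify the axioms of a (closed) model structure in the standard order, using almost entirely the results already assembled; the only real content is to reconcile the two given descriptions of the trivial (co)fibrations with the intrinsic classes cofibration $\cap\,\ww$ and fibration $\cap\,\ww$.

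I would first dispose of the easy axioms. The category $\sset$ is a presheaf category, hence complete and cocomplete. The weak equivalences satisfy 2-out-of-3 and are retract-closed by Lemma \ref{lem-weq}, while the cofibrations (monomorphisms) and the fibrations (defined by a right lifting property) are visibly retract-closed. Both factorizations --- cofibration followed by trivial fibration, and trivial cofibration (in fact strong anodyne extension, by the remark in the proof of Lemma \ref{lem-factorization}) followed by fibration --- are supplied by Lemma \ref{lem-factorization}. Granting the identifications discussed next, the lifting axioms are then immediate: a cofibration lifts against a trivial fibration by the very definition of trivial fibration, and a trivial cofibration, being a retract of a strong anodyne extension, lifts against any fibration because fibrations have the right lifting property against horn inclusions and the class of maps with the left lifting property against all fibrations is closed under pushout, transfinite composite and retract --- this is exactly Lemma \ref{lem-saturation}.

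The substance is therefore to show trivial fibration $=$ fibration $\cap\,\ww$ and trivial cofibration $=$ cofibration $\cap\,\ww$. For the fibrations I would argue as follows: a trivial fibration is a fibration since horn inclusions are monomorphisms, and $\exinf$ carries it to a trivial fibration by Proposition \ref{prop-properties}, hence to a homotopy equivalence by Lemma \ref{lemb-triv}, so it lies in $\ww$; conversely, if $f$ is a fibration in $\ww$, then $\exinf f$ is a fibration between Kan complexes by Propositions \ref{prop-exinf-kan} and \ref{prop-properties} and is a homotopy equivalence by definition of $\ww$, hence a trivial fibration by Lemma \ref{lemb-fib}, so $f$ is a trivial fibration by Proposition \ref{prop-reflects}. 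For the cofibrations: by retract-closure of $\ww$ and of the monomorphisms it is enough to check that a strong anodyne extension $m$ lies in $\ww$, and indeed $\exinf m$ is a strong anodyne extension between Kan complexes by Propositions \ref{prop-properties} and \ref{prop-exinf-kan}, hence a homotopy equivalence by Lemma \ref{lemb-anod}; conversely, given a monomorphism $i\colon X\to Y$ in $\ww$, I would factor $i = p\circ j$ with $j\colon X\to Z$ a strong anodyne extension and $p\colon Z\to Y$ a fibration, observe that $j\in\ww$ by what was just shown and hence $p\in\ww$ by 2-out-of-3, conclude from the fibration case that $p$ is a trivial fibration, and finally take a lift in the square with left edge $i$, right edge $p$, top edge $j$ and bottom edge $\id_Y$ to exhibit $i$ as a retract of $j$ --- so $i$ is a trivial cofibration.

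The main obstacle is the inclusion fibration $\cap\,\ww\subseteq$ trivial fibration: this is precisely where Proposition \ref{prop-reflects} --- the one genuinely hard ingredient --- enters, and it also underlies the corresponding statement for cofibrations, whose argument bootstraps off the fibration case. Everything else is bookkeeping with the closure properties of $\exinf$ from Proposition \ref{prop-properties}, the fact that $\exinf$ takes values in Kan complexes (Proposition \ref{prop-exinf-kan}), and the elementary simplicial-homotopy lemmas \ref{lemb-anod}--\ref{lemb-fib}.
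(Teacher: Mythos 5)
Your proposal is correct and follows essentially the same route as the paper: the easy axioms are delegated to Lemmas \ref{lem-weq}--\ref{lem-saturation}, and the four inclusions identifying $\ww\cap\cc$ and $\ww\cap\ff$ with the declared trivial (co)fibrations are proved exactly as in the text, with Proposition \ref{prop-reflects} carrying the weight in the fibration case and the retract argument via the (SAE, fibration) factorization handling the cofibration case. The only (immaterial) difference is that for ``trivial fibration $\Rightarrow \ww$'' you apply $\exinf$ first and then Lemma \ref{lemb-triv}, whereas the paper applies Lemma \ref{lemb-triv} to $f$ itself and then uses preservation of homotopy equivalences.
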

\begin{proof}
  It remains only to show that $\ww \cap \cc$ is indeed the class of trivial cofibrations as described in \ref{def-classes} and that $\ww \cap \ff$ is the class of trivial fibrations.
  Clearly every trivial fibration is a fibration and every trivial cofibration is a cofibration.
  We have four inclusions to prove: $\tf,\tc \subseteq \ww$, $\ww \cap \cc \subseteq \tc$, and $\ww \cap \ff \subseteq \tf$.
  
  Let $m : A \to B$ be a strong anodyne extension.
  Then by \ref{prop-properties}, $\exinf m$ is a strong anodyne extension.
  Now by \ref{lemb-anod}, $\exinf m$ is a simplicial homotopy equivalence, so $m$ is a weak equivalence.
  Any retract of $m$ is a weak equivalence by \ref{lem-weq}.

  Let $f : X \to Y$ be a trivial fibration.
  By \ref{lemb-triv} $f$ is a simplicial homotopy equivalence, so by \ref{prop-properties} $\exinf f$ is a simplicial homotopy equivalence and so $f$ is a weak equivalence.

  Now suppose $f : X \to Y$ is a fibration which is a weak equivalence.
  Then $\exinf f$ is a simplicial homotopy equivalence and by \ref{prop-properties} it is a fibration.
  Then by \ref{lemb-fib}, $\exinf f$ is a trivial fibration and by \ref{prop-reflects} $f$ is a trivial fibration.

  Finally suppose that $m : A \to B$ is a cofibration which is a weak equivalence.
  Then $m$ can be factorized as $m = p i$ where $i$ is a strong anodyne extension and $p$ is a fibration.
  By \ref{lem-weq}, $p$ is a weak equivalence and thus a trivial fibration by \ref{lemb-fib}.
  Hence one checks easily that $m$ is a retract of $i$.
\end{proof}

\section{Properness}\label{sec-properness}

A model category is said to be \emph{right-proper} when the class of weak equivalences is stable under pullback along fibrations.
The category $\sset$ equipped with the Kan-Quillen model structure is well known to satisfy both this condition and its dual, left-properness (weak equivalences stable under pushout along cofibrations).
See, for example, section II.8 of \cite{GOERSSJARDINE1999}, for a proof that left-properness follows formally from the fact that every simplicial set is cofibrant and that right-properness follows formally from the fact that $\exinf$ preserves fibrations and pullbacks.
In the remainder of this paper we give a new, elementary and direct proof of right-properness, which in fact leads us to a slightly stronger result.

Since trivial fibrations are stable under pullback along any map, it suffices to show that strong anodyne extensions are stable under pullback along fibrations.
Pullbacks in $\sset$ commute with colimits, so it will suffice to check:
\begin{theorem}\label{thm-proper}
  The pullback of a horn inclusion along a fibration is a \emph{strong} anodyne extension.
\end{theorem}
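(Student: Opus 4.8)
The plan is to construct an explicit P-structure on the pullback, using \ref{prop-structure-to-presentation} to conclude. Fix a fibration $f \colon X \to Y$ and a horn inclusion $\horn^m_k \hookrightarrow \triangle^m$, and form the pullback $A \hookrightarrow B$ where $B = X \times_Y \triangle^m$ and $A = X \times_Y \horn^m_k$. A non-degenerate simplex of $B$ is a pair $(x, \phi)$ where $\phi \colon \triangle^p \to \triangle^m$ and $x \colon \triangle^p \to X$ agree over $Y$, and at least one of them is non-degenerate (in fact, since $\horn^m_k \hookrightarrow \triangle^m$ is a monomorphism one can arrange $\phi$ to be a specific non-degenerate representative). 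The simplex lies outside $A$ exactly when $\phi$ hits every vertex of $\triangle^m$ except possibly misses some row, i.e. precisely when the image of $\phi$ is not contained in $\horn^m_k$; concretely the ``bad'' $\phi$ are the ones whose image of vertices is all of $\{0,\dots,m\}$ or all of $\{0,\dots,m\}\setminus\{j\}$ for the forbidden cases. I would first nail down this combinatorial description carefully.

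Next I would define the type I / type II partition by mimicking the proof of \ref{lem-horn-bdy}, pushed along $f$. There, simplices of $\triangle^m \times \triangle^n$ outside the gadget were split using ``lattice walk'' combinatorics in the $\triangle^m$ coordinate alone; here the $\triangle^m$-coordinate $\phi$ plays the analogous role, and the fibration $f$ is exactly what lets us build $\parent(x,\phi)$: given a type II simplex, the ``split the bad move'' operation produces a new $\triangle^m$-coordinate $\phi'$ of one dimension higher, and we need a lift of $x$ along $f$ over the new $\phi'$; this is supplied by the right lifting property of $f$ against the relevant horn inclusion (the face we are \emph{not} splitting off is already present, so what is demanded is precisely a horn filler). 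So $\parent$ is defined by: take the purely combinatorial parent of $\phi$ from \ref{lem-horn-bdy}, then choose (using that $f$ is a fibration) a compatible lift of the $X$-coordinate. One must check $\dim \parent = \dim + 1$, that $(x,\phi)$ is a face of $\parent(x,\phi)$ in a unique way (unique $i$), and that $\parent$ is a bijection onto the type I simplices — injectivity and surjectivity should both follow because the construction is natural over the already-established P-structure downstairs in $\triangle^m$, the $X$-coordinate lift being uniquely determined by the face relation once $\phi$ is fixed.

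Finally I would verify well-foundedness of the ancestral relation. Here the key observation is that the projection $B \to \triangle^m$ sends the ancestral relation on $\nond{B}\setminus\nond{A}$ into (a suitable preimage of) the ancestral relation on $\nond{\triangle^m}\setminus\nond{\horn^m_k}$ for the P-structure of \ref{lem-horn-bdy}, which is well-founded (indeed finite); combined with the fact that faces strictly decrease dimension, this produces a relation-preserving map to $\ord$ — or, even more simply, one checks directly that ancestor sets are finite, exactly as in the finiteness remark at the end of the proof of \ref{lem-horn-bdy} together with \ref{lem-finite-pred}. The main obstacle I anticipate is not well-foundedness but the bookkeeping in the second step: making the choices of lifts along $f$ coherent enough that $\parent$ is genuinely a bijection and that ``$(x,\phi)$ is a face of $\parent(x,\phi)$ in a unique way'' holds — i.e. ruling out the accidental coincidence of the split-off face with some other face. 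This is where the precise combinatorics of which rows/columns are skipped in \ref{lem-horn-bdy} must be re-examined in the fibered setting, and where one must use that, in that proof, the parent is recovered from the child by a canonical face map $d_i$ with $i$ determined by the walk.
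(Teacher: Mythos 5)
There is a genuine gap, and it sits exactly where you defer the ``bookkeeping''. Your plan is to lift ``the purely combinatorial parent of $\phi$ from \ref{lem-horn-bdy}'' along $f$. But \ref{lem-horn-bdy} concerns the pushout-product $(\triangle^m \times \partial\triangle^n) \cup (\horn^m_k \times \triangle^n) \hookrightarrow \triangle^m \times \triangle^n$; its lattice-walk combinatorics describe non-degenerate simplices of a \emph{product} of two simplices and are not available in the present situation. Here the map downstairs is the horn inclusion $\horn^m_k \hookrightarrow \triangle^m$ itself, whose complement contains only two non-degenerate simplices, whereas the simplices of $B$ outside $A$ project to (typically degenerate) simplices of $\triangle^m$ whose vertex image contains $\{0,\dots,m\}\setminus\{k\}$. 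So there is no ``already-established P-structure downstairs'' for your construction to be natural over, and no canonical higher-dimensional $\phi'$ to lift. Relatedly, your assertion that ``the $X$-coordinate lift is uniquely determined by the face relation once $\phi$ is fixed'' is false: fillers supplied by a fibration are not unique, which is precisely why a na\"ive ``choose a lift for each type II simplex'' does not by itself produce a well-defined injection $\parent$ with image exactly the type I simplices.

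The missing idea is a globally coherent parent operator. One assigns to \emph{every} simplex $x$ of $B$ outside $A$ --- degenerate ones included --- a simplex $\qarent x$ of one higher dimension obtained by inserting one extra vertex lying over the vertex $k$ of $\triangle^m$ (so the projection of $\qarent x$ is \emph{more degenerate} than that of $x$, not a ``split move'' of a walk). The definition is by recursion on the profile $(r,s)$, where $r = \dim x$ and $s$ is the number of vertices of $x$ over $k$: explicit clauses prescribe $\qarent$ on degeneracies and on simplices already of the form $\qarent y$, and in the remaining case one uses the fibration to fill the generalized horn consisting of $x$ together with the already-constructed simplices $\qarent(d_a x)$ for all vertices $a$ of $x$ over $k$; one must separately verify that these clauses agree on overlaps. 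The resulting identity $d_a(\qarent x) = \qarent(d_a x)$ for vertices $a$ over $k$ is what makes $\parent$ (namely $\qarent$ restricted to non-degenerate simplices not already in the image of $\qarent$) a bijection onto the type I simplices with a unique face index, and well-foundedness is then an induction on profiles rather than an appeal to finiteness downstairs. None of this recursive machinery appears in your sketch, and it cannot be replaced by naturality over a base P-structure, because no such structure exists.
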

The traditional statement of right-properness amounts to Theorem \ref{thm-proper} with the word `strong' omitted.

\begin{notation}
  For the rest of the paper, we fix the following pullback square
  \begin{displaymath}
    \begin{tikzpicture}
      \node (A) at (0,2) {$A$};
      \node (B) at (0,0) {$B$};
      \node (h) at (2,2) {$\horn_k^n$};
      \node (t) at (2,0) {$\triangle^n$};
      \draw[->,font=\footnotesize] (A) -- node[auto,swap] {$m$} (B);
      \draw[->,font=\footnotesize] (B) -- node[auto,swap] {$f$} (t);
      \draw[->] (A) -- (h);
      \draw[->] (h) -- (t);
    \end{tikzpicture}
  \end{displaymath}
  in which $f$ is a fibration.
  We shall exhibit a regular pairing for $m$.
\end{notation}

The idea of the proof of Theorem \ref{thm-proper} is as follows.
Call $x \in B_{r+s}$ an $(r,s)$-simplex if there are precisely $s$ values of $i$ for which $d_ix = (k) \in \triangle^n$ (hence if $x \notin A_{r+s}$ we have $r \geq n$).
The pairing will pair each (nondegenerate) $(r,s)$-simplex (not in $A_{r+s}$) with either an $(r,s-1)$- or $(r,s+1)$-simplex, and we work recursively in $r$, adding all $(r,s)$-simplices before any $(r+1,s)$-simplices.
The first step in constructing the pairing is to assign each $(r,0)$-simplex $x$ an $(r,1)$-simplex $y$ such that $x$ is a face of $y$.
This is simple enough using the fact that $f$ is a fibration, and note that the remaining faces of $y$ are all $(r-1,1)$, so either in $A_r$ or already added.
However, this process leaves plenty of $(r,1)$-simplices $y$ unmatched, so we assign the remaining ones $(r,2)$-simplices $z$.
Now note that such a $z$ will have, in addition to $y$, another face $y'$ which is an $(r,1)$-simplex.
Hence we need to take care here, in case $y = y'$, or our process happens to match $y'$ with $z$ as well, or the eventual pairing fails to be regular --- we cannot use the fibrancy of $f$ na\"ively.
Thus from $y$ we choose $y'$ in advance to be either something we have already matched with an $(r,0)$-simplex in the first step, or something in the subcomplex $A$, or a degenerate simplex.
We use the fibrancy of $f$ to fill in the two simplices $y$ and $y'$, which share one maximal face, to an $(r,2)$-simplex.

We continue by recursion in $s$, but for larger $s$ the situation is more complicated.
When matching an $(r,s)$-simplex $x$ to an $(r,s+1)$-simplex $y$, $y$ will have $s$ faces that are $(r,s)$-simplices in addition to $x$.
We choose all of them in advance based on $x$ with the same requirement as before, but now we must rely on having had the foresight to have matched $(r,s-1)$-simplices with $(r,s)$-simplices in such a way that we can now take $s$ suitable $(r,s)$-simplices together with $x$ to form a compatible family suitable for Kan filling.
We now present the details in the framework of regular pairings, after introducing some notational conventions that will allow us to greatly simplify the simplicial identities.

\begin{notation}
  Let $\aaa$ be some fixed countable dense totally ordered set without greatest or least elements.
  We shall refer to the elements of $\aaa$ as \emph{names}.
  Given a non-empty finite subset $I \subseteq \aaa$ considered as a poset, let $\triangle^I$ denote the nerve of $I$.
  Note that $\triangle^I \cong \triangle^{\lvert I \rvert-1}$ in a unique way.

  Given such an $I$ and a simplicial set $X$, an \emph{$I$-simplex} of $X$ is a map $x : \triangle^I \to X$.
  The set of $I$-simplices is denoted $X_I$ --- clearly $X_I \cong X_{\lvert I \rvert-1}$.
  Given such an $I$ and $X$, if $a \in I$ denote by $d_a$ the function $X_I \to X_{I\backslash \{a\}}$ given by precomposition with the nerve of the poset inclusion $I\backslash \{a\} \to I$.
  It will be convenient to say, for any such $I$ and $a,b \in I$, that \emph{$b$ covers $a$ in $I$}, written $a \covered{I} b$, if $a < b$ and for all $x \in I$:
  \begin{displaymath}
    a \leq x \leq b \implies a = x \text{ or } x = b.
  \end{displaymath}
  Hence, when $b \in I$, $c \in \aaa\backslash I$ and $b \covered{I\cup\{c\}} c$, we denote by $s_b^c$ that function $X_I \to X_{I \cup \{c\}}$ given by precomposing with the nerve of the poset surjection $I \cup \{c\} \to I$ sending $c$ to $b$ and acting as the identity otherwise.
  Finally, for $a \in I$ let the \emph{$a$\textsuperscript{th} vertex map} $v_a : X_I \to X_{\{a\}} = X_0$ be the function given by precomposing with the nerve of the inclusion $\{a\} \hookrightarrow I$.
\end{notation}

The purpose of this notation is essentially to validate the observation that, for any $a,b \in I$ (with $a \neq b$) and $x \in X_I$, we have $d_a d_b x = d_b d_a x$.
The notation also simplifies the laws for swapping the order of degeneracy maps and for interchanging face and degeneracy maps.
As we go through the proof of \ref{thm-proper} we shall assume that simplices come with a suitable choice of names.

We explained above that the pairing $\parent$ will match $(r,s)$-simplices $x$ with $(r,s+1)$-simplices $y$ as long no $(r,s-1)$-simplex $w$ was matched with $x$ earlier.
It is simpler to define an operation $\qarent$ that assigns all $(r,s)$-simplices an $(r,s+1)$-simplex at first, and then to cut down the domain subsequently.
To be precise, for every (possibly degenerate) $n$-simplex $x$ in $B$ but not in $A$, we will assign an $(n+1)$-simplex $\qarent x$ which has $x$ as a face.
If we assume that $x$ is given as an $I$-simplex, then for some fresh name $z$ we will give $\qarent x$ as an $(I \cup \{z\})$-simplex, where $d_z \qarent x = x$ and $f(v_z x) = (k) \in (\triangle^n)_0$, and moreover $z$ is greater than all of the names $a \in I$ with $f(v_a x) = (k) \in (\triangle^n)_0$.
We will indicate the choice of name $z$ by writing $\qarent_z x$.
We require $\qarent$ to be a homomorphism with respect to face maps corresponding to vertices over $(k) \in (\triangle^n)_0$:
\begin{equation}
\label{k-faces-condition}\tag{$\dagger$} \text{if $a \in I$ with $f(v_a x) = (k) \in (\triangle^n)_0$, then $d_a(\qarent_z x) = \qarent_z(d_a x)$.}
\end{equation}
This is the compatibility condition that ensures Kan filling is possible using the following lemma.

\begin{lemma}\label{lem-union-of-faces}
  Let $n \geq 1$.
  For any $k$ with $1 \leq k \leq n$, the inclusion of a union of precisely $k$ codimension 1 faces of $\triangle^n$ is a strong anodyne extension.
\end{lemma}
\begin{proof}
  We use induction on $n$ --- the case $n = 1$ is trivial.
  Let $n > 1$, pick a set $I \subset \aaa$ of names with $\lvert I \rvert = n+1$, and let $J \subset I$ satisfy $1 \leq \lvert J \rvert \leq n = \lvert I \rvert - 1$.
  We consider the inclusion
  \begin{displaymath}
    \bigcup_{j \in J} \triangle^{I \backslash \{j\}} \hookrightarrow \triangle^I
  \end{displaymath}
  and proceed by a backwards induction on $\lvert J \rvert$.
  The case $\lvert J \rvert = n$ is trivial.
  Suppose $\lvert J \rvert < n$ and let $a \in I \backslash J$. Then, by the induction hypothesis for $n$, the inclusion
  \begin{displaymath}
    \bigcup_{j \in J} \triangle^{I \backslash \{a,j\}} \hookrightarrow \triangle^{I \backslash \{a\}}
  \end{displaymath}
  is strong anodyne.
  Hence its pushout, the inclusion
  \begin{displaymath}
    \bigcup_{j \in J} \triangle^{I \backslash \{j\}} \hookrightarrow \bigcup_{x \in J \cup \{a\}} \triangle^{I \backslash \{x\}},
  \end{displaymath}
  is also strong anodyne, so we may finish by applying the induction hypothesis for $|J|$ to
  \begin{displaymath}
    \bigcup_{x \in J \cup \{a\}} \triangle^{I \backslash \{x\}} \hookrightarrow \triangle^I
  \end{displaymath}
  and composing these two inclusions.
\end{proof}

\begin{definition}\label{def-qarent}
  The following clauses define $\qarent_z x$ by recursion over $\dim x$, where $x$ is a (possibly degenerate) $I$-simplex of $B$ not in $A$:
  \begin{itemize}
  \item [a)] if $x = \qarent_w y$, then
    \begin{displaymath}
      \qarent_z (\qarent_w y) = s_w^z(\qarent_w y),
    \end{displaymath}
  \item [b)] if $x = s_c^d y$, then
    \begin{displaymath}
      \qarent_z (s_c^d y) = s_c^d (\qarent_z y),
    \end{displaymath}
  \item [c)] and otherwise, if $x$ is nondegenerate and not already in the image of $\qarent$, let $J \subseteq I$ be the set of names in $x$ whose vertices lie over $k$.
    Then, assuming condition \eqref{k-faces-condition}, the set of simplices $\{x\} \cup \{\qarent_z(d_ax) \mid a \in J\}$ gives rise to a map $\triangle^I \cup \bigcup_{a \in J} \triangle^{(I\backslash \{a\}) \cup \{z\}} \to B$, where the domain is a subcomplex of $\triangle^{I\cup\{z\}}$ given as a union of $\lvert J \rvert + 1$ codimension 1 faces.
    Since $x$ is not in $A$ we see that $J \neq I$ and hence that $0 < \lvert J\rvert + 1 < \lvert I \rvert + 1$.
    Now we may use Lemma \ref{lem-union-of-faces} and the fact that $f$ is a fibration to deduce that this map extends to one defined on the whole of $\triangle^{I \cup \{z\}}$.
    Let $\qarent_z(x)$ be the $(I \cup \{z\})$-simplex corresponding to some choice of such an extension.
  \end{itemize}
\end{definition}

Note that the hypotheses of clauses a) and b) are not mutually exclusive, but in every clause the definition of $\qarent_z x$ depends on the values of $\qarent y$ only for simplices $y$ with $\dim y < \dim x$.
There are some things to check before we can state that \ref{def-qarent} gives us a well-defined function (up to choice of fillers for the fibration $f$).
Firstly, for clause c) we need the following lemma.

\begin{lemma}\label{lem-qarent-faces}
  Let $n \geq 1$ and suppose we are given $\qarent$ defined on all simplices $x \in B$ with $x \notin A$ and $\dim x \leq n$, satisfying the three clauses of \ref{def-qarent} for every such $x$, and satisfying \eqref{k-faces-condition} for those $x$ with $\dim x \leq n-1$.
  Then \eqref{k-faces-condition} also holds for every $x \in B \backslash A$ with $\dim x = n$.
\end{lemma}
\begin{proof}
  Easy inspection.
\end{proof}

To complete the verification that \ref{def-qarent} does define a function, we must check that clause b) gives the same output for any $x$ which is degenerate in two different ways, and also that any simplex matching the hypotheses of clauses a) and b) is given the same image under $\qarent$ by either clause.
We check these points in the following two lemmas.

\begin{lemma}
  Let $X$ be a simplicial set, $I \subseteq \aaa$ a finite set of names with $a,b,c,d \in I$ where $a \covered{I} b$ and $c \covered{I} d$, $x$ an $I \backslash \{b\}$-simplex and $y$ an $I \backslash \{d\}$-simplex.
  Suppose $s_a^b x = s_c^d y$.
  Then either $a = c$ (hence $b = d$ and $x = y$) or there exists a unique $I \backslash \{b,d\}$-simplex $w$ with $x = s_c^d w$ and $y = s_a^b w$.
\end{lemma}
\begin{proof}
  Easy calculation: without loss of generality $b \leq c$ rather than $d \leq a$.
  Then take $w = d_d x = d_a y$ when $b = c$ and $w = d_c x = d_a y$ when $b < c$.
\end{proof}

\begin{lemma}
  Let $n \geq 1$ and suppose we are given $\qarent$ defined on all simplices $x \in B$ with $x \notin A$ and $\dim x \leq n$ and satisfying the three clauses of \ref{def-qarent} and \eqref{k-faces-condition} for every such $x$.
  Now suppose $I \subseteq \aaa$ is a set of names with $\lvert I \rvert = n + 2$, we have names $a,b,z \in I$ with $a \covered{I} b$, and $x \in B_{I \backslash \{z\}} \backslash A_{I \backslash \{z\}}$ and $y \in B_{I \backslash \{b\}} \backslash A_{I \backslash \{b\}}$ are two $n$-simplices in $B$ but not in $A$.
  Suppose that these simplices with these names satisfy $\qarent_z x = s_a^b y$.
  Then for any appropriate fresh name $w$, we have $s_z^w(\qarent_z x) = s_a^b(\qarent_w y)$, i.e.\ clauses a) and b) of \ref{def-qarent} agree on the value of $\qarent_w$ on the $(n+1)$-simplex $\qarent_z x = s_a^b y$.
\end{lemma}
\begin{proof}
  We cannot have $z = a$ since, in $\qarent_z x$, $z$ is the greatest name with $f(v_z(x)) = (k) \in (\triangle^n)_0$.
  If $z \neq b$, then $x = d_z s_a^b y = s_a^b d_z y$, so $s_a^b y = s_a^b \qarent_z(d_z y)$, so $y = \qarent_z (d_z y)$.
  Hence $\qarent_w y = s_z^w y$, from which the result follows.
  Alternatively, if $z = b$, then $y = d_z(\qarent_z x) = x$, and the result follows easily.
\end{proof}

We have shown that we can indeed give a function $\qarent$ satisfying the clauses of \ref{def-qarent} and condition \eqref{k-faces-condition}, so we are now in a position to give the pairing on $m : A \hookrightarrow B$.
\begin{definition}
  A nondegenerate simplex $x \in \nond{B}\backslash \nond{A}$ is of type I if and only if it is in the image of $\qarent$.
  For a type II simplex $y$, we define $\parent y = \qarent y$.
\end{definition}

\begin{proof}[of \ref{thm-proper}]
  Let us check that this is indeed a pairing on $m$.
  If $x$ is of type II then $d_z (\qarent_z x) = x$.
  Suppose $d_a (\qarent_z x) = x$ for some name $a \neq z$.
  Then $f(v_ax) = (k) \in (\triangle^n)_0$.
  So by \ref{lem-qarent-faces}, $x = d_a(\qarent_z x) = \qarent_z(d_a x)$, a contradiction, since $x$ was assumed not to be in the image of $\qarent$.
  Hence $\qarent_z x$ is nondegenerate and hence is of type I.
  If instead we suppose that $y = \qarent x$ is of type I, then $x$ is nondegenerate and not in the image of $\qarent$ since otherwise \ref{def-qarent} would force $y$ to be degenerate.
  The above also shows that we do indeed have a proper pairing.

  It remains to check that the pairing is regular.
  Given an $I$-simplex $x \in B_I$ of type II, define
  \begin{displaymath}
    \phi(x) = \big\lvert \{a \in I \mid f(v_a(x)) \neq (k) \in (\triangle^n)_0\} \big\rvert.
  \end{displaymath}
  Now suppose that $x \modanc n y$.
  We will show that $\phi(x) < \phi(y)$.
  Still using $I$ as the set of names in $x$, since $x \neq y$ we may pick names $z \in I$ and $a \in \aaa \backslash I$ such that $y$ can be modelled as an $(I\backslash \{z\}) \cup \{a\}$-simplex with $x = d_a(\parent_z y)$.
  If $f(v_a(y)) = (k) \in (\triangle^n)_0$, then $x = d_a(\parent_z y) = \qarent_z(d_a x)$ is in the image of $\qarent$ and so is of type I, a contradiction.
  Hence $f(v_a(y)) \neq (k) \in (\triangle^n)_0$, so $\phi(x) = \phi(y) - 1$.
  Thus, by \ref{lem-modanc} and \ref{prop-structure-to-presentation}, $m$ is a strong anodyne extension.
\end{proof}

\begin{remark}
  Let us conclude by remarking that we can avoid the axiom of choice by considering fibrancy as a structure rather than a property.
  Suppose we have a fibration $f : Y \to X$ and a fibrant object in $\sset/Y$, i.e.\ a fibration $g : Z \to Y$.
  Suppose moreover that $f$ and $g$ are equipped with choices of solution to every horn-filling problem.
  Then $\Pi_f(g) \in \sset/X$ may be equipped with a choice of solution to every horn-filling problem.
  One uses the adjunction $f^* \dashv \Pi_f : \sset/Y \to \sset/X$ and the fact that \ref{thm-proper} tells us that horn inclusions in $\sset/X$ are sent by $f^*$ to strong anodyne extensions with an explicit anodyne presentation derived from the fibration structure on $f$.
\end{remark}

\section*{Acknowledgements}
I would like to thank Martin Hyland and Zhen Lin Low for their encouragement and many illuminating conversations.
I thank the anonymous referee for helpful feedback and pointing out the connection to Forman's discrete Morse Theory.


\begin{thebibliography}{}

\bibitem[Douteau, 2018]{douteau2018}
Douteau, S. (2018).
\newblock A simplicial approach to stratified homotopy theory.
\newblock arXiv:1801.04797.

\bibitem[Forman, 1998]{forman-morse-theory-for-cell-complexes}
Forman, R. (1998).
\newblock Morse theory for cell complexes.
\newblock {\em Adv. Math.}, 134(1):90--145.

\bibitem[Gabriel and Zisman, 1967]{GABRIELZISMAN1967}
Gabriel, P. and Zisman, M. (1967).
\newblock {\em Calculus of fractions and homotopy theory}.
\newblock Ergebnisse der Mathematik und ihrer Grenzgebiete, Band 35.
  Springer-Verlag New York, Inc., New York.

\bibitem[Goerss and Jardine, 1999]{GOERSSJARDINE1999}
Goerss, P.~G. and Jardine, J.~F. (1999).
\newblock {\em Simplicial homotopy theory}, volume 174 of {\em Progress in
  Mathematics}.
\newblock Birkh\"auser Verlag, Basel.

\bibitem[Henry, 2019]{henry2019}
Henry, S. (2019).
\newblock A constructive account of the {Kan-Quillen model} structure and of
  {Kan's} {Ex$^{\infty}$} functor.
\newblock arXiv:1905.06160.

\bibitem[Johnstone, 1987]{johnstone-notes-on-logic-and-sets}
Johnstone, P.~T. (1987).
\newblock {\em Notes on logic and set theory}.
\newblock Cambridge Mathematical Textbooks. Cambridge University Press,
  Cambridge.

\bibitem[Joyal and Tierney, 2008]{JOYALTIERNEY2008}
Joyal, A. and Tierney, M. (2008).
\newblock Notes on simplicial homotopy theory.
\newblock
  \url{http://mat.uab.cat/~kock/crm/hocat/advanced-course/Quadern47.pdf}.

\bibitem[Kan, 1957]{KAN1957}
Kan, D.~M. (1957).
\newblock On c.s.s.\ complexes.
\newblock {\em Amer. J. Math.}, 79:449--476.

\bibitem[May, 1967]{MAY_simplicialobjects1967}
May, J.~P. (1967).
\newblock {\em Simplicial objects in algebraic topology}.
\newblock Van Nostrand Mathematical Studies, No. 11. D. Van Nostrand Co., Inc.,
  Princeton, N.J.-Toronto, Ont.-London.

\bibitem[Moss, 2017]{my-github-repo}
Moss, S. (2017).
\newblock Ex-infinity.
\newblock \url{https://github.com/SeanKMoss/ex-infinity}.

\bibitem[Quillen, 1967]{QUILLEN1967}
Quillen, D.~G. (1967).
\newblock {\em Homotopical algebra}.
\newblock Lecture Notes in Mathematics, No. 43. Springer-Verlag, Berlin-New
  York.

\end{thebibliography}
\end
{document}